\numberwithin{equation}{section}
\newcommand{\norm}[1]{\left\Vert#1\right\Vert}
\newtheorem{theorem}{Theorem}[section]
\newtheorem{lemma}{Lemma}[section]
\newtheorem{definition}{Definition}[section]
\newtheorem{remark}{Remark}[section]
\newcommand{\Real}{\mathbb{R}}
\newcommand{\tore}{\mathbb{T}}
\newcommand{\R}{\mathbb{R}}
\newcommand{\N}{\mathbb{N}}
\newcommand{\Z}{\mathbb{Z}}
\newcommand{\C}{\mathbb{C}}
\begin{document}

\title{Analysis of a Reduced-Order Approximate Deconvolution Model and
  its interpretation as a Navier-Stokes-Voigt regularization}

\author{ Luigi C. Berselli\thanks{Dipartimento di Matematica,
    Universit\`a di Pisa, I-56127 Pisa, Italy
    (luigi.carlo.berselli@unipi.it), partially supported by GNAMPA and
    by PRIN 2012: 
    ``Nonlinear hyperbolic pdes dispersive and transport equations: theoretical and
  applicative aspects.''}
  \and
  Tae-Yeon Kim\thanks{Department of Civil Infrastructure and
    Environmental Engineering, Khalifa University of Science,
    Technology \& Research (KUSTAR), Abu Dhabi,
     UAE (taeyeon.kim@kustar.ac.ae)}
\and 
Leo G. Rebholz\thanks{Department of Mathematical Sciences, Clemson
  University, Clemson, SC 29634 (rebholz@clemson.edu), partially
  supported by NSF grant DMS1112593.} } 

\date{}
\maketitle
\begin{abstract}
  We study mathematical and physical properties of a family of
  recently introduced, reduced-order approximate deconvolution models.
  We first show a connection between these models and the NS-Voigt
  model, and 
  that NS-Voigt can be re-derived in the
  approximate deconvolution framework.  We then study the energy
  balance and spectra of the model, and provide results of some
  turbulent flow computations that backs up the theory.  Analysis of
  global attractors for the model is also provided, as is a detailed
  analysis of the Voigt model's treatment of pulsatile flow.
\end{abstract}
\section{Introduction}
Over the past several decades, many large eddy simulation (LES) models
have been introduced and tested, with varying degrees of success; see
e.g.~\cite{BIL06,SAG01} for extensive overviews.  Each of these models
has its share of good and bad properties, and with each generation of
models, the amount of ``good'' tends to increase while the amount of
``bad'' tends to decrease.  Two attractive models of the current
generation are approximate deconvolution models
(ADMs)~\cite{AS99,AS01}, and Voigt regularization
models~\cite{CLT2006,LT10}, both of which have recently seen
significant interest from both mathematicians and engineers.  These
models are known to be well-posed~\cite{BB12,BL2012,DE06,LT10}, have
attractive physical properties (such as energy and helicity
conservation and cascades~\cite{LT10,LR12}), and have performed well
in (at least some) numerical simulations~\cite{KLRW12,sto022},
including when applied to the Euler equations for ideal
fluids~\cite{CLT2006}.

However, both models have their downsides: It is unknown how to
efficiently and stably implement the usual ADM with standard finite
element methods, and the Voigt
regularization model can lack accuracy, likely due to its low
consistency error with both the Navier-Stokes equations (NSE) 
\begin{eqnarray} 
  \label{eq:NSE}
  {u}_t+{(u\cdot\nabla)\,u}-\nu \Delta
  u+\nabla p&=&f,
  \\
  \nabla \cdot u & = & 0, \label{eq:NSE2} 
\end{eqnarray}
and the spatially filtered NSE~\cite{GRT13,LRT10}. Here $u$ denotes
the fluid velocity and $p$ the kinematic pressure.

In the recent papers~\cite{GRT13,R13b}, a new system was proposed as
an alteration of the usual ADM, but which is amenable to
unconditionally stable and efficient implementation in a finite
element context. Here, we add further insight in the derivation, from
an abstract point of view, see Section~\ref{sec:derivation}.  This
model, which we study herein, is referred to as the reduced-order ADM
(RADM), and is given by
\begin{eqnarray}
  v_t-\alpha^2\Delta v_t+D v\cdot\nabla D v
  +\nabla q-\nu\Delta D v  &=&f,\label{eq:cadm1}
  \\
  \nabla \cdot Dv & = & 0, \label{eq:cadm2} 
\end{eqnarray}
where the vector $v$ represents an averaged velocity, $q$ a modified
pressure, while $D$ is an approximate deconvolution operator meant to
represent an approximate inverse to the Helmholtz filter $F$, with
$F\phi:=\overline{\phi}$ satisfying the filter equation:
\begin{equation*}
  -\alpha^2\Delta \overline{\phi} + \overline{\phi} = \phi.
\end{equation*}
The evolution system~\eqref{eq:cadm1}-\eqref{eq:cadm2} requires an
initial condition for velocity, and both the filter and evolution
systems require appropriate boundary conditions.
By restricting to the space-periodic setting (as usual for these
problems, with only a very few remarkable exceptions), we can assume
$D$ to be linear, positive, self-adjoint, to commute with $F$, and
moreover that there exists $d_0,d_1\in\R$ such that
\begin{equation*}
  1\leq d_0\leq\|D\|\leq d_1<\infty,
\end{equation*}
where $\|D\|$ denotes the operator norm of $D$. These assumptions are
true for most common types of approximate deconvolution defined with
the Helmholtz filtering operation, including van Cittert~\cite{DE06},
multiscale~\cite{D11}, Tikhonov, and Tikhonov-Lavrentiev~\cite{SM10},
see also the review in~\cite{Ber2012}. While we will often be general
in our use of $D$, in some circumstances such as computations and
energy spectra calculations, it is necessary to select a specific
operator.  In these cases, we will use the most common type of
approximate deconvolution, van Cittert, and we will denote $D=D_N$,
where $N$ denotes the order of deconvolution.  This operator is
defined in detail in Section~\ref{sec:notation}.

The RADM system~\eqref{eq:cadm1}-\eqref{eq:cadm2} was derived from the
fourth-order version of the ADM by making the approximation $D\approx
F^{-1}$ in the viscous term, which reduced the fourth order system to
second order.  This system was proved to be well posed in~\cite{R13b}
and of high order consistency with the spatially filtered NSE.
In~\cite{GRT13}, with $D$ chosen to be van Cittert approximate
deconvolution, an efficient finite element implementation of the RADM
performed very well on the $Re_{\tau}=180$ turbulent channel flow
benchmark test, as well as in a 2D benchmark flow problem with a
contraction and two outlets.

\textbf{Plan of the Paper.}
It is the purpose of this paper to study some fundamental mathematical
and physical properties of the model~\eqref{eq:cadm1}-\eqref{eq:cadm2}.
After providing mathematical preliminaries in
Section~\ref{sec:notation}, we show a strong connection between the
RADM~\eqref{eq:cadm1} and the NS-Voigt model~\eqref{eq:NS-Voigt} in 
Section~\ref{sec:derivation}.  In fact, we show that the RADM can be
considered as a NS-Voigt-deconvolution model.  That is, if the
regularization operator $F^{-1}$ is replaced with $D^{-1}F^{-1}$, then
the RADM is recovered after a change of variables.  This is
interesting and important since a significant amount of theory has
been developed for the NS-Voigt model by Titi and coworkers in
e.g.~\cite{KT09,LRT10,LT10} as well as in~\cite{BB12}.  We also show
that NS-Voigt can be derived using the approximate deconvolution
approach originally developed by Stolz and
Adams~\cite{AS99,AS01,ada021,sto022}.  In this respect we establish a
strong tie between Voigt-type models and approximate deconvolution type
models in general.

An energy analysis of the RADM is given in Section~4.  This includes
an energy balance and a Kraichnan-type analysis of the energy spectra
of the model.  Some numerical results are also given here, to back up
the theory.  Our results reveal that the RADM is more computable than
the NSE and has spectral scaling similar to the Leray-$\alpha$
model~\cite{CHOT05}, which matches that of true fluid flow on larger
scales in the inertial range, but smaller scales are transferred more
quickly to even smaller scales. Moreover, our analysis shows that increasing the
deconvolution order $N$ shortens the inertial range and creates less
energy in each scale.  

Section 5 considers long-time behavior of the model, and proves existence of smooth
global attractors.   Section 6 considers the
pseudo-parabolic nature of the system, and presents a study of
some analytical properties and the model's treatment of pulsatile
flow, which is an interesting example because it allows for exact
analytical solutions.  Finally, conclusions are drawn in the final
section, and prospective future work is discussed.

\medskip

%
\section{Preliminaries}
\label{sec:notation}
We present here notation and mathematical preliminaries to make the
analysis of later sections smoother.  As is typical with theoretical
analyses of fluid models, we restrict our attention herein to problems
involving periodic flows on the box $\Omega = (0,2\pi L)^3$.  We
stress that all machinery requires the space-periodic setting to have
the commutation between the deconvolution operator and general
differential operators. The application of ADM methods to motion
bounded by rigid walls is problematic and the theory is still
missing. Some results in special geometries can be found
in~\cite{Ber2012b,sto022}. In this framework, NS-Voigt
represents one of the few models which is naturally set in a bounded
domain with Dirichlet conditions.

Furthermore, we consider the system~\eqref{eq:cadm1}-\eqref{eq:cadm2} to be
non-dimensionalized by introducing $L$, $U$, and $L/U$ for the
characteristic length, velocity, and time scales.  Thus, the viscosity
$\nu>0$ can be considered dimensionless and representing the inverse
of the Reynolds number of the flow. To simplify the notation, in the
theorems we will suppose without loss of generality that $L=1$.

We characterize the divergence-free spaces by using Fourier Series on
the 3D torus. We denote by $({e}_1, { e}_2, { e}_3)$ the orthonormal
basis of $\R^3$, and by $x:=(x_1, x_2, x_3) \in \R^3$ the standard
point in $\R^3$.  Let $\mathbb{T}$ be the torus defined by $\mathbb{T}
:= \R^3 / \Z^3$.  We use $\|\cdot\|$ and $(\cdot,\cdot)$ to denote the
$L^{2}(\tore)$ norm and inner product, and we impose the zero
mean-value condition on velocity, pressure, and external force.  We
will use the customary Lebesgue $L^p(\tore)$ and Sobolev
$W^{k,p}(\tore)$ spaces, and for simplicity (if not explicitly needed)
we do not distinguish between scalar and vector valued real functions.

We define, for any exponent $s\geq0$,
\begin{equation*}
  V^s := \left\{w : \tore \rightarrow \R^3, \, \,
    w \in W^{s,2}(\tore)^3,   \quad\nabla\cdot w
    = 0, \quad\int_{\tore}w(x)\,dx = {0} \right\},
\end{equation*}
where $W^{s,2}(\tore)^3:=\big[W^{s,2}(\tore)\big]^3$ and if $0\leq
s<1$ the condition $\nabla\cdot w=0$ must be understood in a weak
sense. For $w \in V^s$, we can expand the velocity field with Fourier
series as follows
\begin{equation*}
  w (x)=\sum_{|k|\geq1}\widehat{w}_{k}
  e^{i{k \cdot x}},\quad\text{where }k=(k_1,k_2,k_3)\in \Z^{3} \text{
    is the wave-number vector,}
\end{equation*}
and the Fourier coefficients are given by
$\C^3\ni\widehat{w}_{k}:=\frac{1}{|\tore |}\int_{\tore} w(x)e^{-i{k
    \cdot x}}dx$, with $|\tore |$ the Lebesgue measure of $\tore$. If
$|k|:=\sqrt{|k_{1}|^{2}+|k_{2}|^{2}+|k_{3}|^2}$, then the $V^s$ norm
is defined by
\begin{equation*}
  \| w \|^{2}_{V^s} := \sum_{|k|\geq1} |k|^{2s} |\widehat{w }_{k}|^{2},
\end{equation*}
where, as above, $\| w \|_{H^0} := \|w\|$. The inner products
associated with these norms are
\begin{equation*}
  (w, v )_{V^s} = \sum_{|k|\geq1} |k|^{2s}  \widehat{w
  }_{k}\cdot\overline{\widehat{v }_{k}}. 
\end{equation*}
We can finally characterize $V^s\subset W^{s,2}(\tore)^3$ as follows:
\begin{equation*}
  V^s := \Big \{ w(x)  = \sum_{{|k|}\geq1} \widehat w_{ k}e^{i{k \cdot x}}:\ \  
  \sum_{{|k|\geq1}} | {k} |^{2s} |\widehat{w}_{ k}|^{2} < \infty,\ \ 
  {k}\cdot \widehat{w}_{k}=0,\ \ 
  \widehat{w}_{-k}= \overline{\widehat{w}_{k}}\  \Big\}.
\end{equation*}
Recall that due to the zero mean value, the Poincar\'e inequality
holds in $V^1$: There exists $\lambda>0$ such that for every $\phi\in
V^1$,
\begin{equation*}
\norm{\phi}\le \lambda^{-1/2}\norm{\nabla \phi}.
\end{equation*}
This inequality will be used extensively in our analysis, and allows
us to use the more convenient gradient norm in place of the $V^1$
norm.
\subsection{Approximate deconvolution}
\label{sec:app-dec}
We assume the deconvolution operator $D$ is self adjoint  and
commutes with both $\Delta$ and $F$. These assumptions are known to be
satisfied for van Cittert approximate deconvolution~\cite{DE06,LR12},
multiscale deconvolution~\cite{D11}, Tikhonov, and
Tikhonov-Lavrentiev~\cite{SM10}. See also the reviews
in~\cite{Ber2012,CL14} for more results about these operators.

We denote the filtering operator $A:=F^{-1}=(-\alpha^2\Delta + I)$,
and we introduce now a family of abstract deconvolution operators
$\{D_N\}_N$, which are of order zero, and such that
\begin{equation*}
  D_0=I\quad\text{and }\quad D_N\to A \text{ as }N\to+\infty.
\end{equation*}
We note that van Cittert approximate deconvolution,
\begin{equation*}
D_N = \sum_{n=0}^N (I-F)^n,
\end{equation*}
forms such a family, and the other types of deconvolution mentioned
above can also be written to satisfy the above limiting
property.  Although we will use a general deconvolution operator $D$
when possible, in some analysis (e.g. energy spectra analysis) and for
computations, it is necessary to specify a specific operator.  In
these cases, we will use van Cittert approximate deconvolution, as it
is one of the most popular in the ADM community. The basic properties
we will use of the operator $D=D_N$ are summarized in the following
lemma, see~\cite{BL2012}.
\begin{lemma}
  \label{lem:lower_bound}
  For each $N\in\N$ the operator $D_N:\ V^s \to V^s$ is self-adjoint, it
  commutes with differentiation, and the following properties hold true:
  If $\widehat{D}_N$ denotes the symbol of the operator $D_N$, then 
  \begin{eqnarray*}
    & \label{IINVC9} 1\leq \widehat{D}_N({k})\leq N+1   & \forall\, {k} \in \Z^3,
    \\
    & \label{JJV34}  \displaystyle \widehat D_N ({k}) \approx (N+1)
    \frac{1+\alpha^2 |{k}|^2}{\alpha^2 |{k}|^2 } & \text{ for
      large } |{k}|,  
    \\
    &  \label{TT0ON}  \displaystyle  \lim_{|{k}
      |\to+\infty}\widehat{D}_N({k})=N+1  & \text{for fixed    }\alpha>0,  
    \\  
    & 
    \widehat{D}_N({k})\leq(1+\alpha^2|{k}|^2)  &  \forall\,
    {k} \in \Z^3
    ,\     \alpha>0.   
  \end{eqnarray*} 
\end{lemma}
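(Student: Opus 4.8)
\medskip
\noindent\textbf{Proof sketch.}
The plan is to reduce every assertion to an elementary computation with the Fourier symbol of $D_N$. First I would record that the Helmholtz filter $F$ is the Fourier multiplier with symbol $\widehat F(k)=(1+\alpha^2|k|^2)^{-1}$, which is immediate upon testing the filter equation $-\alpha^2\Delta\overline\phi+\overline\phi=\phi$ against $e^{ik\cdot x}$. Hence $A=F^{-1}$ has symbol $1+\alpha^2|k|^2$ and $I-F$ has symbol
\[
\beta(k):=1-\frac{1}{1+\alpha^2|k|^2}=\frac{\alpha^2|k|^2}{1+\alpha^2|k|^2},
\]
which on $V^s$, where $|k|\ge1$, satisfies $0<\frac{\alpha^2}{1+\alpha^2}\le\beta(k)<1$. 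Since a finite sum of compositions of Fourier multipliers is again a Fourier multiplier whose symbol is the corresponding sum of products of symbols, $D_N=\sum_{n=0}^N(I-F)^n$ has symbol given by the truncated geometric series
\[
\widehat D_N(k)=\sum_{n=0}^N\beta(k)^n=\frac{1-\beta(k)^{N+1}}{1-\beta(k)}=(1+\alpha^2|k|^2)\Big(1-\Big(\tfrac{\alpha^2|k|^2}{1+\alpha^2|k|^2}\Big)^{N+1}\Big).
\]

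With this formula in hand, the four displayed estimates are elementary facts about $\sum_{n=0}^N\beta^n$ with $\beta\in[0,1)$. The bound $\widehat D_N(k)\ge1$ is the $n=0$ term, which equals $1$, all other terms being nonnegative; $\widehat D_N(k)\le N+1$ follows since each of the $N+1$ summands is $\le1$; letting $|k|\to\infty$ forces $\beta(k)\to1$, whence $\widehat D_N(k)\to N+1$, which is the third line; and $\widehat D_N(k)=\sum_{n=0}^N\beta^n\le\sum_{n=0}^\infty\beta^n=(1-\beta)^{-1}=1+\alpha^2|k|^2$ gives the last line. The only point that calls for a little care is the ``$\approx$'' of the second line: I would read it as the asymptotic equivalence $\widehat D_N(k)\big/\big[(N+1)\tfrac{1+\alpha^2|k|^2}{\alpha^2|k|^2}\big]\to1$ as $|k|\to\infty$, which holds because both numerator and denominator tend to $N+1$; if a genuine inequality is preferred one even has $\widehat D_N(k)\le(N+1)\tfrac{1+\alpha^2|k|^2}{\alpha^2|k|^2}$ for every $k$, since the function $\beta\mapsto(N+1)(1-\beta)-\beta(1-\beta^{N+1})$ is convex on $[0,1]$ and vanishes together with its first derivative at $\beta=1$.

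Finally I would dispatch the structural claims. The operator $F$, being a Fourier multiplier with real symbol depending only on $|k|$, is self-adjoint with respect to $(\cdot,\cdot)_{V^s}$; hence so is every power $(I-F)^n$ and so is the finite sum $D_N$, and, being Fourier multipliers, they all commute with one another and with $\Delta$ and $\nabla$. Boundedness $D_N:V^s\to V^s$ follows from $1\le\widehat D_N(k)\le N+1$, which gives $\|D_Nw\|_{V^s}\le(N+1)\|w\|_{V^s}$; and $D_N$ preserves the constraints defining $V^s$, because replacing $\widehat w_k$ by the real scalar multiple $\widehat D_N(k)\widehat w_k$ leaves $k\cdot\widehat w_k=0$ intact and, since $\widehat D_N(-k)=\widehat D_N(k)\in\R$, preserves $\widehat w_{-k}=\overline{\widehat w_k}$. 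There is no serious obstacle here: the entire content is the symbol identity above together with elementary estimates on a truncated geometric series, the asymptotic in the second line being the only place worth stating carefully.
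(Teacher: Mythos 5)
Your proof is correct: the explicit symbol formula $\widehat D_N(k)=\sum_{n=0}^N\big(\tfrac{\alpha^2|k|^2}{1+\alpha^2|k|^2}\big)^n=(1+\alpha^2|k|^2)\big(1-\beta(k)^{N+1}\big)$ immediately yields all four estimates, and the multiplier argument settles self-adjointness, commutation, and boundedness on $V^s$. The paper itself gives no proof, citing Berselli--Lewandowski for this lemma, and your truncated-geometric-series computation is exactly the standard argument used there, so there is nothing to add.
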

This lemma shows that in the specific case of practical use, we can
assume that the upper and lower bounds for $\widehat{D}_N$ are $d_0=1$
and $d_1=N+1$, which reflects in similar bounds for $D_{N}$.
\subsection{Known theory for RADM solutions}

We define now the notion of weak solution for the RADM.
\begin{definition}
  We say that $v\in L^\infty(0,T;V^1)$ is a weak solution to~\eqref{eq:cadm1}-\eqref{eq:cadm2}
  if 
  \begin{equation*}
    \frac{d}{dt}   \left[( v,\phi)+  \alpha^2(\nabla v,\nabla\phi)\right]    + (D v \cdot \nabla D v,\phi)
    + \nu (\nabla D v,\nabla \phi)
    =  (f,\phi),
  \end{equation*}
  for all $\phi\in V^1$, and also provided that $v_t\in
  L^2(0,T;V^{-1})$ and the following energy identity holds true:
  \begin{multline}
    \label{radme1}
    \frac{1}{2}\left( \alpha^2\norm{\nabla D^{1/2} v(T)}^2 +
      \norm{D^{1/2} v(T)}^2\right) + \nu\int_0^T \norm{\nabla Dv}^2\,dt
    \\
    = \frac{1}{2}\left( \alpha^2\norm{\nabla D^{1/2} v(0)}^2 +
      \norm{D^{1/2} v(0)}^2\right) + \int_0^T (f,Dv)\,dt.
  \end{multline}
\end{definition}
We have the following existence and uniqueness theorem, which is
based on a standard Galerkin approach, together with the energy
estimate~\eqref{radme1} below, see~\cite[Thm.~2.1]{R13b}.
\begin{theorem}
\label{thm:existence} 
  Let $f\in L^2(0,T;H^{-1}(\Omega))$ and $v_0\in V^1$ be given.  Then,
  there exists a unique weak solution to~\eqref{eq:cadm1}-\eqref{eq:cadm2}.
\end{theorem}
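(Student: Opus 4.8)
The plan is a Faedo--Galerkin construction that leans on the space-periodic structure: $D$, $F$, and $-\Delta$ are simultaneously diagonal in the Fourier basis, so all operators commute and $D$ is bounded below by $d_0\ge 1$ and above by $d_1$ on every $V^s$; the Voigt term $-\alpha^2\Delta v_t$ is what will ultimately make the $3$D problem tractable. Let $\{e_k\}$ be the $L^2$-orthonormal basis of divergence-free, zero-mean Fourier modes, $V_m=\mathrm{span}\{e_1,\dots,e_m\}$, and $P_m$ the $V^0$-orthogonal projection onto $V_m$; I would seek $v_m(t)\in V_m$ solving the weak formulation tested against all $\phi\in V_m$. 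Since $I-\alpha^2\Delta$ is diagonal and positive, this is an ODE system with polynomial (hence locally Lipschitz) right-hand side, so Picard--Lindel\"of gives a local solution. Testing with $\phi=Dv_m$, which is legitimate as $D$ preserves $V_m$, the trilinear term vanishes because $\nabla\cdot Dv_m=0$, the viscous term gives $\nu\|\nabla Dv_m\|^2$, the time-derivative terms assemble into $\tfrac{d}{dt}\tfrac12\big(\|D^{1/2}v_m\|^2+\alpha^2\|\nabla D^{1/2}v_m\|^2\big)$, and $\langle f,Dv_m\rangle$ is absorbed by Poincar\'e and Young into $\tfrac\nu2\|\nabla Dv_m\|^2+C\|f\|_{V^{-1}}^2$. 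Because the symbol of $D$ is $\ge d_0\ge1$, this energy dominates $\alpha^2 d_0\|v_m\|_{V^1}^2$, so $v_m$ is bounded in $L^\infty(0,T;V^1)$ (hence global) and $Dv_m$ in $L^2(0,T;V^1)$, uniformly in $m$, and the Galerkin energy identity holds.

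For compactness I would estimate $\partial_t v_m$: rewriting the scheme as $(I-\alpha^2\Delta)\partial_t v_m=P_m\big[f+\nu\Delta Dv_m-Dv_m\cdot\nabla Dv_m\big]$ and inverting $I-\alpha^2\Delta:V^1\to V^{-1}$ (which commutes with $P_m$), the bounds $\|\Delta Dv_m\|_{V^{-1}}=\|Dv_m\|_{V^1}$ and $\|Dv_m\cdot\nabla Dv_m\|_{V^{-1}}\le C\|Dv_m\cdot\nabla Dv_m\|_{L^{3/2}}\le C\|Dv_m\|_{L^6}\|\nabla Dv_m\|\le C\|Dv_m\|_{V^1}^2$ (using $V^1\hookrightarrow L^3$ in $3$D, whence $L^{3/2}\hookrightarrow V^{-1}$) show $\partial_t v_m$ bounded in $L^2(0,T;V^1)$ uniformly. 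By Aubin--Lions--Simon with $V^1\hookrightarrow\!\!\hookrightarrow V^s\hookrightarrow V^{-1}$ for $0<s<1$, a subsequence converges strongly in $L^2(0,T;V^s)$ (indeed in $C([0,T];V^s)$) and weakly-$*$ in $L^\infty(0,T;V^1)$, with $\partial_t v_m\rightharpoonup\partial_t v$ in $L^2(0,T;V^1)$; since $D$ is bounded on $V^s$, also $Dv_m\to Dv$ strongly in $L^2(0,T;L^2)$. Linear terms pass to the limit trivially; for the nonlinearity one writes $(Dv_m\cdot\nabla Dv_m,\phi)=-(Dv_m\cdot\nabla\phi,Dv_m)$ for $\phi\in V_{m_0}$ and uses the strong space-time $L^2$ convergence of $Dv_m$, then concludes by density that the weak formulation holds for all $\phi\in V^1$. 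Since $\partial_t v\in L^2(0,T;V^1)$ one has $v\in C([0,T];V^1)$, hence $v(0)=v_0$, and $t\mapsto\|D^{1/2}v(t)\|^2+\alpha^2\|\nabla D^{1/2}v(t)\|^2$ is absolutely continuous; testing the limit equation with $Dv$ then yields \eqref{radme1} as an equality (lower semicontinuity gives ``$\le$'', the extra regularity gives ``$\ge$'').

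For uniqueness, let $v_1,v_2$ be weak solutions with the same data and $w=v_1-v_2$; subtracting the equations and testing with $Dw$, the term $(Dv_1\cdot\nabla Dw,Dw)$ vanishes by $\nabla\cdot Dv_1=0$, leaving
\begin{equation*}
\tfrac12\tfrac{d}{dt}\big(\|D^{1/2}w\|^2+\alpha^2\|\nabla D^{1/2}w\|^2\big)+\nu\|\nabla Dw\|^2=-(Dw\cdot\nabla Dv_2,Dw).
\end{equation*}
Hölder, the $3$D Gagliardo--Nirenberg inequality $\|Dw\|_{L^4}^2\le C\|Dw\|^{1/2}\|\nabla Dw\|^{3/2}$, and Young's inequality bound the right-hand side by $\tfrac\nu2\|\nabla Dw\|^2+C\|Dw\|^2\|\nabla Dv_2\|^4$; since $v_2\in L^\infty(0,T;V^1)$ and $D$ has bounded symbol, $\|\nabla Dv_2\|^4\in L^\infty(0,T)\subset L^1(0,T)$, so Gr\"onwall applied to $y(t):=\|D^{1/2}w(t)\|^2+\alpha^2\|\nabla D^{1/2}w(t)\|^2$ with $y(0)=0$ forces $w\equiv0$. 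The only genuinely delicate point is the nonlinear bookkeeping in the passage to the limit and in the difference estimate; there is no real obstruction, and the reason is structural: the Voigt term upgrades the solution to $L^\infty(0,T;V^1)$ with $v_t\in L^2(0,T;V^1)$, exactly as for NS-Voigt, which is what makes both convergence of the trilinear term and the Gr\"onwall argument work in three dimensions.
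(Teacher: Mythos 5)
Your proposal is correct and follows essentially the same route as the paper: a Faedo--Galerkin construction with the energy identity obtained by testing with $Dv$, the upgrade $v_t\in L^2(0,T;V^1)$ by inverting $I-\alpha^2\Delta$ (the paper's comparison/Lax--Milgram step), compactness and passage to the limit, and uniqueness via a Gr\"onwall argument made possible by the Voigt regularization, exactly as in the NS-Voigt theory the paper cites. Your write-up merely fills in details (Aubin--Lions, the Gagliardo--Nirenberg bound) that the paper leaves to the references \cite{R13b,BB12,CLT2006}.
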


\medskip

The proof of this result is standard and follows the same guidelines
of the results in~\cite{BB12,CLT2006}, with the key point being the
energy identity~\eqref{radme1}.  From~\cite{R13b}, with the
assumptions on the data, we have that the solution $v\in
L^{\infty}(0,T;V^1)$, and so all manipulations in this proof are
justified. Multiplying~\eqref{eq:cadm1} by $Dv$
and integrating over the physical domain gives
  \begin{equation*}
    \alpha^2(\nabla v_t,\nabla Dv) + (v_t,Dv) + (Dv\cdot\nabla Dv,Dv) -
    (q,\nabla \cdot Dv) + \nu\norm{\nabla Dv}^2 = (f,Dv),
  \end{equation*}
  and then using a vector identity on the first two terms, and that
  $\nabla \cdot Dv=0$, both the pressure and nonlinear terms vanish,
  leaving us with
  \begin{equation*}
    \frac12\frac{d}{dt} \left( \alpha^2\norm{\nabla D^{1/2}v}^2 +
      \norm{D^{1/2}v}^2 \right) + \nu\norm{\nabla Dv}^2 = (f,Dv).
  \end{equation*}
  Integrating in time over $[0,T]$ provides~\eqref{radme1}.

  This equality shows that the Galerkin approximate solution
  $v^{m}=\sum_{1\leq|k|\leq m}\widehat{v}_{k} e^{i{k \cdot x}}$ is
  bounded uniformly in $L^\infty(0,T;V^1)$. Then, by making use of the
  comparison argument one can easily prove that
  \begin{equation*}
    v_{t}^{m}-\alpha^2\Delta v_{t}^{m} \in L^2(0,T;V^{-1}).
  \end{equation*}
  The Lax-Milgram lemma set in the space $V^1$ implies that $v^m_t\in
  L^2(0,T;V^1)$. In particular, this proves that one can extract from
  the Galerkin approximate functions a converging sub-sequence, and
  moreover shows also that the solution $v\in C([0,T];V^1)$, which is
  a uniqueness class.  The regularity of the time derivative
  allows to show that an energy equality (instead of an inequality as
  for the NSE) holds true.
  
  \bigskip
  
  In the case of the problem without viscosity, i.e. the RADM-Euler
  equations 
  \begin{eqnarray}
  v_t -\alpha^2\Delta v_t + D v \cdot \nabla D v+ \nabla q
  & = & f, \label{Ecadm1} 
  \\
  \nabla \cdot Dv & = & 0, \label{Ecadm2} 
\end{eqnarray}
by adapting techniques from~\cite{BB12,CLT2006} and by using the
estimates on the operator $D$ from Lemma~\ref{lem:lower_bound}, it is easy
to infer the following result (the same applies also to the
Euler-Voigt-deconvolution model, as discussed in the next section)
\begin{theorem} 
  \label{thm:existence-Titi}
  Let $T > 0,\,m \geq 1$, and $v_0\in V^m$. Let $f \in
  C([-T,T];W^{m-1,6/5})$ with $\nabla\cdot f=0$. Then, there exists a
  unique solution $v$ of the RADM Euler
  equations~\eqref{Ecadm1}-\eqref{Ecadm2} which belongs to
  $C^1([-T,T];V^m)$.  Moreover,
  \begin{equation*}
    \sup_{t\in[-T,T]}\|v (t) \|_{V^m} < C(\alpha,
    \|v_0\|_{V^m},\sup_{-T<t<T}\|f(t)\|_{W^{m-1,6/5}},T,d_0,d_1).
  \end{equation*}
\end{theorem}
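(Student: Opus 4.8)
The plan is to establish existence, uniqueness, and the stated a priori bound for the RADM--Euler system \eqref{Ecadm1}--\eqref{Ecadm2} by a Galerkin approximation in $V^m$, following the strategy of \cite{BB12,CLT2006} for the Euler--Voigt equations, with the extra algebraic input on $D$ supplied by Lemma~\ref{lem:lower_bound}. First I would set up the finite-dimensional Galerkin system, projecting onto the span of the first $m$ Fourier modes (say $P_M$, $M\in\N$); since the regularized time-derivative operator $I-\alpha^2\Delta$ is invertible on each $V^s$ with bounded inverse, the projected system is an ODE with a smooth (polynomial) right-hand side, so local existence and uniqueness of $v^M\in C^1$ is immediate, and the solution extends as long as the $V^m$ norm stays finite.

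The heart of the argument is the uniform-in-$M$ a priori estimate in $V^m$. I would test the equation with $\Lambda^{2m} D v^M$, where $\Lambda=(-\Delta)^{1/2}$, so that the linear terms produce $\tfrac12\frac{d}{dt}\big(\|D^{1/2}v^M\|_{V^m}^2+\alpha^2\|D^{1/2}v^M\|_{V^{m+1}}^2\big)$; note that because $D$ is self-adjoint, commutes with $\Delta$, and satisfies $1\le\widehat D\le d_1$, this quantity is equivalent (with constants depending only on $d_0,d_1$ and $\alpha$) to $\|v^M\|_{V^m}^2+\alpha^2\|v^M\|_{V^{m+1}}^2$, and in particular controls $\|v^M\|_{V^m}$. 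The force term is handled by Cauchy--Schwarz, Sobolev embedding $W^{m-1,6/5}\hookrightarrow V^{-1}$ (duality with $V^1\hookrightarrow L^6$) and Young's inequality, using $\|Dv^M\|\le d_1\|v^M\|$. The nonlinear term $(Dv^M\cdot\nabla Dv^M,\Lambda^{2m}Dv^M)$ is the crucial one: by the commutator (Kato--Ponce / Moser) estimate on the torus together with the divergence-free condition $\nabla\cdot Dv^M=0$ to cancel the top-order piece, one bounds it by $C\|\nabla Dv^M\|_{L^\infty}\|Dv^M\|_{V^m}^2\le C_{d_1}\|v^M\|_{V^m}^3$ for $m>5/2$, i.e. by $C_{d_1}$ times the cube of the controlled quantity. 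Feeding this into the differential inequality and invoking a Gronwall-type (Riccati) argument yields a bound on $\sup_{[-T,T]}\|v^M(t)\|_{V^m}$ depending only on $\alpha,\|v_0\|_{V^m},\sup\|f\|_{W^{m-1,6/5}},T,d_0,d_1$, as claimed; this also rules out finite-time blow-up of the ODE, giving global-in-$M$ solutions on $[-T,T]$. (The backward-in-time direction works identically since the estimate is symmetric in $t$.)

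With the uniform bound in hand, I would pass to the limit: extract a subsequence with $v^M\rightharpoonup v$ weak-$*$ in $L^\infty(-T,T;V^m)$; the equation then gives a uniform bound on $v^M_t-\alpha^2\Delta v^M_t$ in $L^\infty(-T,T;V^{m-1})$, hence via the bounded inverse of $I-\alpha^2\Delta$ a uniform bound on $v^M_t$ in $L^\infty(-T,T;V^{m-1})$. Aubin--Lions then yields strong convergence of $v^M$ in $C([-T,T];V^{m'})$ for any $m'<m$, which is more than enough to pass to the limit in the quadratic nonlinearity and identify $v$ as a solution; the regularity $v_t\in L^\infty(-T,T;V^{m-1})$ together with $v\in L^\infty(-T,T;V^m)$ and an interpolation/continuity argument upgrades this to $v\in C^1([-T,T];V^m)$. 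Uniqueness follows by taking the difference $w=v_1-v_2$ of two solutions, testing with $Dw$, using $\nabla\cdot Dw=0$, and estimating the nonlinear difference terms by $C\|w\|_{V^1}^2$ times the $V^m$-norms (bounded by the a priori estimate), then applying Gronwall to $\|D^{1/2}w\|^2+\alpha^2\|D^{1/2}w\|_{V^1}^2$. The main obstacle is the nonlinear a priori estimate: one must be careful that all constants arising from the commutator estimate, the Sobolev embeddings, and the norm equivalences induced by $D$ depend only on the quantities listed in the theorem (in particular on $d_0,d_1$ and $\alpha$, not on $M$), and that the divergence-free structure is genuinely used to absorb the top-order derivative so that only $d_1$ — not the unbounded-in-$N$ symbol behavior for moderate $|k|$ — enters.
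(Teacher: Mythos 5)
Your overall architecture (Galerkin or fixed point, a priori bound, compactness, uniqueness via testing the difference with $Dw$) is reasonable, but the core a priori estimate has two genuine gaps that prevent it from proving the theorem as stated. First, your bound on the nonlinear term relies on a Kato--Ponce/Moser commutator estimate with $\|\nabla Dv\|_{L^\infty}$, which you yourself note requires $m>5/2$; the theorem is claimed for every $m\ge 1$, and the cases $m=1,2$ are precisely the ones your estimate does not reach. The route the paper points to (adapting Cao--Lunasin--Titi \cite{CLT2006} and Berselli--Bisconti \cite{BB12}, together with Lemma~\ref{lem:lower_bound}) avoids the commutator machinery entirely: one inverts the Voigt operator and views the system as an ODE in the Banach space $V^m$, $v_t=(I-\alpha^2\Delta)^{-1}P\bigl[f-Dv\cdot\nabla Dv\bigr]$, where the two-derivative gain of $(I-\alpha^2\Delta)^{-1}$ makes the right-hand side a locally Lipschitz map $V^m\to V^m$ for every $m\ge1$ (this is also why the force is taken in $W^{m-1,6/5}$: $L^{6/5}\hookrightarrow V^{-1}$ by duality with $V^1\hookrightarrow L^6$, so $W^{m-1,6/5}\hookrightarrow V^{m-2}$ and $(I-\alpha^2\Delta)^{-1}Pf\in V^m$). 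Picard's theorem then gives local existence, uniqueness, and $C^1$ regularity in time directly, forwards and backwards.

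Second, even where your commutator estimate applies, the resulting differential inequality is cubic (Riccati type), and Gronwall applied to $y'\le Cy^{3/2}$ only controls the solution up to a finite blow-up time of order $y(0)^{-1/2}$; it does not yield the claimed bound on all of $[-T,T]$ for arbitrary $T$ (this is exactly the obstruction one faces for the Euler equations themselves). Global-in-time control for the Voigt-type regularization comes from its special structure, which your argument never invokes: testing with $Dv$ gives the model energy identity for $\|D^{1/2}v\|^2+\alpha^2\|\nabla D^{1/2}v\|^2$, hence a bound on $\|v\|_{V^1}$ on all of $[-T,T]$ depending only on the data (here $d_0,d_1$ enter through Lemma~\ref{lem:lower_bound}); one then bootstraps to $V^m$ with estimates in which the top norm appears only linearly, with coefficients given by the already-bounded lower norms, again because the nonlinearity composed with $(I-\alpha^2\Delta)^{-1}$ loses no derivatives. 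Without this conserved-energy-plus-bootstrap step (the heart of \cite{CLT2006,BB12} and of Larios--Titi \cite{LT10}), your argument proves only local-in-time existence, not the stated global result with a bound depending on $\alpha,\|v_0\|_{V^m},\sup_t\|f(t)\|_{W^{m-1,6/5}},T,d_0,d_1$.
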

\section{Remarks on the derivation of the model and a connection to
  NS-Voigt}
\label{sec:derivation}
Interestingly, despite its derivation as an altered/reduced ADM, the
proof for well-posedness of the system in~\cite{R13b} uses ideas of
the well-posedness proof for NS-Voigt from~\cite{BB12}.  This suggests
a connection between the two models, and we show now that a NS-Voigt
deconvolution model can be derived from~\eqref{eq:cadm1}-\eqref{eq:cadm2}.
Writing
\begin{equation*}
v_t - \alpha^2\Delta v_t = F^{-1}v_t =F^{-1}D^{-1}Dv_t=D^{-1}F^{-1}(Dv)_t,
\end{equation*}
and setting $w=Dv$ yields
\begin{eqnarray}
  D^{-1}F^{-1}w_t  + w \cdot \nabla w + \nabla q - \nu \Delta w
  & = & f, \label{v1}
 \\
  \nabla \cdot w & = & 0. \label{v2} 
\end{eqnarray}
This system~\eqref{v1}-\eqref{v2} is precisely the NS-Voigt
regularization except for the $D^{-1}$ term included in the
regularization, and thus we could refer to~\eqref{v1}-\eqref{v2} as
the NS-Voigt-deconvolution regularization.  Since the usual
regularization in NS-Voigt is given by just $F^{-1}$, and since
$D\approx F^{-1}$, this ``new'' regularization $D^{-1}F^{-1}$ will
provide a higher order of consistency for~\eqref{v1}-\eqref{v2} to the
NSE (which are recovered if no regularization is applied).

In this section we enlarge the derivation of the Voigt models as given
in~\cite{R13b}, and in particular we highlight the fact that Voigt
models can be seen as ADMs. The Euler-Voigt equations read as
\begin{eqnarray}
  v_t   - \alpha^2\Delta v_t+v \cdot \nabla  v + \nabla q
  & = & f, \label{Eadm1} 
  \\
  \nabla \cdot Dv & = & 0, \label{Eadm2} 
\end{eqnarray}
and have been introduced in the LES community by Cao, Lunasin, and
Titi~\cite{CLT2006}. They started from the inviscid ``simplified
Bardina'', which is the same model studied in Layton and
Lewandowski~\cite{LL06} (zeroth order Stolz and Adams) in the case
$\nu=0$. The exact words are ``\textit{...Therefore, we propose the
  inviscid simplified Bardina model as regularization of the 3D Euler
  equations that could be implemented in numerical computations of
  three dimensional inviscid flows. }'' They then re-introduced the
viscosity, obtaining the NS-Voigt
equations~\eqref{eq:cadm1}-\eqref{eq:cadm2}.  Later, in Larios and
Titi~\cite{LT10}, the model is justified as
follows ``\textit{\dots it was noted in Cao Lunasin and
  Titi~\cite{CLT2006} that formally setting $\nu=0$ in simplified
  Bardina amounts to simply adding the term $-\alpha^2\Delta u_t$ to
  the left-hand side of the Euler equations yielding what they call
  the Euler-Voigt equations. Remarkably, if one reintroduces the
  viscous term $-\nu\Delta u$ the resulting equations happen to
  coincide with equations governing certain visco-elastic fluids known
  as Kelvin-Voigt fluids, which were first introduced and studied by
  A.P.  Oskolkov~\cite{Osk1973}.}''

We show now that the NS-Voigt model can also be derived with usual
modeling techniques and in the unified framework of deconvolution
models.  In addition to the motivation of the order reduction
explained in~\cite{GRT13}, we follow now a pure approximate
deconvolution derivation.  

In order to the derive the model, apply the filter
$F=:A^{-1}=(I-\alpha^2\Delta)^{-1}$ to the incompressible
NSE~\eqref{eq:NSE} to get
\begin{equation*}
  A^{-1}{u}_t+A^{-1}\big[{(u\cdot\nabla)\,u}\big]-\nu A^{-1}\Delta u+\nabla
  A^{-1} p=A^{-1}f.
\end{equation*}
Rewriting the equation only in terms of the filtered variable
$(\overline{u},\overline{p})$ we get
\begin{equation*}
  \overline{u}_t+A^{-1}\big[{(A\overline{u}\cdot\nabla)\,A\overline{u}}\big]-\nu
  A^{-1}\Delta A\overline{u}+\nabla \overline{p}=A^{-1}f,
\end{equation*}
which is the starting point in the Adams and Stolz~\cite{AS99,AS01} approach. 

Introduce now a family of abstract deconvolution operators $\{D_N\}_N$
which are of order zero (explicit examples are those recalled in
Section~\ref{sec:app-dec}, see especially properties from
Lemma~\ref{lem:lower_bound}) such that
\begin{equation*}
  D_0=I\quad\text{and }\quad D_N\to A \text{ as }N\to+\infty.
\end{equation*}
We then insert the deconvolution into the equations, which allows us to write 
\begin{equation*}
  A\overline{u}\approx D_N\overline{u}.
\end{equation*}
Taking the zeroth order approximation, we get (in the new
approximate variables $v\sim \overline{u}=D_0\overline{u}$ and $r\sim
\overline{p}$),
\begin{equation*}
  {v}_t+A^{-1}\big[{({v}\cdot\nabla)\,{v}}\big]-\nu
  A^{-1}\Delta v+\nabla r=A^{-1}f.
\end{equation*}
Finally, by applying the operator $A$ to both sides, we arrive at
the NS-Voigt equations
\begin{eqnarray} 
  \label{eq:NS-Voigt}
  {v}_t+\alpha^2\Delta v_t+{(v\cdot\nabla)\,v}-\nu \Delta
  v+\nabla q&=&f,
  \\
  \nabla \cdot v & = & 0, \label{eq:NS-Voigt2} 
\end{eqnarray}
Following the same procedure, but using a higher accuracy operator
$D_N$ (as in~\eqref{v1}) we obtain the NS-Voigt deconvolution model
\begin{equation*}
  {v}_t+A^{-1}\big[{(D_N{v}\cdot\nabla)\,D_N{v}}\big]-\nu
  A^{-1}\Delta D_N v+\nabla q=0.
\end{equation*}
After applying $A$, we recover exactly the RADM model~\eqref{eq:cadm1}
studied in~\cite{GRT13,R13b}.
\section{Energy analysis}
\label{sec:invariants}
This section studies the treatment of energy of the RADM system.  We
begin with the energy balance, which is useful both in obtaining a
priori estimates for existence theorems and as a starting point to
study the RADM energy cascade.  Then, a Kraichnan-type analysis of the
RADM energy cascade is given, followed by some numerical experiments
to test the energy cascade in forced, homogeneous, and isotropic turbulent
flows.
\subsection{Energy balance}
Energy is a quantity of fundamental importance in fluid flow modeling,
both in terms of the physical relevance of its solutions and its
potential for success in simulations.  For a model to be successful in
accurate coarse mesh simulations, it must dissipate sufficient energy
to remain stable.  We show here that the deconvolution in the RADM
acts as an energy sponge, which seems to explain a smoothing effect
from increasing deconvolution in the RADM that was observed
in~\cite{GRT13} with van Cittert approximate deconvolution, and
in~\cite{G13} with multiscale deconvolution.  

We present now some integral identities and estimates, which -at the
level of Faedo-Galerkin approximations- are the core to obtain
existence of weak solutions. At the level of weak solutions (which are
smooth enough) they are still valid and show the balance of the main
flow quantities.
%
\begin{remark}
  Equation~\eqref{radme1} shows that the RADM conserves a model
  energy 
  \begin{equation}
    E_{M}(t):=\frac{1}{2}\left( \alpha^2\norm{\nabla D^{1/2} v(t)}^2 +
      \norm{D^{1/2} v(t)}^2\right). \label{eq:modelenergy}
  \end{equation}
  The conservation of the energy is one of the outstanding open
  problems for the Navier-Stokes equations. The analysis of the
  conservation of the model energy for LES models started with the
  analysis in~\cite{Ber2012b,BL2012}, and several ADM models share
  this property.
\end{remark}
\begin{remark}
  We note that since 
  $ \| D \|\ge 1$, the balance~\eqref{radme1} suggests that
  deconvolution in the RADM increases the effective viscosity.  This
  is consistent with numerical results in~\cite{G13,GRT13}, which
  reported a smoothing effect by increasing the deconvolution order.
\end{remark}
\subsection{Energy spectra}
This section presents an analytic and computational study of the RADM
energy spectra.  A widely accepted theory of turbulent flows starting
with intuitions of L.F.~Richardson and A.N.~Kolmogorov is that
energy is input at large scales, transferred from large to small
scales through the inertial range, and finally dissipated at small
scales by viscosity~\cite{F95,FMRT2001a,BIL06}.  It is further
believed that viscosity has no effect except on very small scales,
on which it acts to decay exponentially fast.

To fully resolve a turbulent flow, one must resolve the NSE on this
entire range of active scales, which is far beyond the limit of modern
computational tools for high Reynolds number problems.  Computing the
NSE without full resolution is widely known to be unstable.  For a
model to be computable in practice, its range of active scales needs
to be significantly smaller than for the NSE, and thus studying its
energy spectra can help make such a determination.

Since the RADM takes a form similar (in some sense) to that of
Leray-$\alpha$, NS-$\alpha$, and the simplified Bardina models, we
adapt physics-based energy spectra analysis performed
in~\cite{CLT2006,CHOT05,FHT01} for these respective models.  We begin
with a decomposition of the velocity into its Fourier modes, which
gives the energy balance
\begin{equation}
  \label{eq:spec1}
  \frac12 \frac{d}{dt}\left( (v_k,\widehat{D}_N(k) v_k) +
    \alpha^2(-\Delta v_k,\widehat{D}_N(k) v_k) \right)
  + \nu(-\Delta \widehat{D}_N(k) v_k,\widehat{D}_N(k) v_k) = T_k - T_{2k}, 
\end{equation}
where $\widehat{D}_N(k)$ is the Fourier transform of the deconvolution
operator $D_N$, and
\begin{equation*}
  T_k := -(( \widehat{D}_N(k) v_k^< \cdot\nabla \widehat{D}_N(k) v_k,
  \widehat{D}_N(k) v_k) + (( \widehat{D}_N(k) (v_k + v_k^>) \cdot\nabla
  \widehat{D}_N(k) (v_k + v_k^>), \widehat{D}_N(k) v_k^<), 
\end{equation*}
with
\begin{equation*}
  v_k^< := \sum_{|j|<k} v_j,\quad \text{and} \quad v_k^> := \sum_{|j|\ge
    2k} v_j. 
\end{equation*}
Note that $T_k - T_{2k}$ represents the net amount of energy
transferred into wave numbers between $[k,2k)$.  Time averaging the
energy balance~\eqref{eq:spec1} gives the energy transfer equation
\begin{equation}
  \label{eq:spec2} 
  \langle \nu(-\Delta \widehat{D}_N(k) v_k,\widehat{D}_N(k) v_k)\rangle
  = \langle T_k \rangle - \langle T_{2k} \rangle. 
\end{equation}
Since the RADM model energy is defined by~\eqref{eq:modelenergy},
as this is the conserved quantity in the absence of viscosity and
external forces, we take $D=D_N$ and define the model energy of an
eddy of size
$k^{-1}$ by
\begin{equation*}
  E_M(k) := \widehat{D}_N(k) (1+\alpha^2 k^2) \sum_{|j|=k} | \hat{v}_j |^2.
\end{equation*}
Combining this with the energy transfer equation~\eqref{eq:spec2} gives the relation
\begin{equation*}
  \langle T_k \rangle - \langle T_{2k} \rangle
  \sim
 \frac{ \nu k^3 \widehat{D}_N(k) E_M(k)}{1+\alpha^2 k^2}
 \sim
 \frac{ \nu k^3 (N+1) E_M(k)}{1+\alpha^2 k^2},
\end{equation*}
with the last relation coming from Lemma~\ref{lem:lower_bound}, since
$\widehat{D}_N(k)\sim (N+1)$ for sufficiently large $k$.  If the wave
number $k$ belongs to the inertial range, then $ \langle T_k \rangle
\approx \langle T_{2k} \rangle$ and there is no leakage of energy
through dissipation.  Then, we have that $k$ is in the inertial range
if
\begin{equation*}
  \frac{ \nu k^3 (N+1) E_M(k)}{1+\alpha^2 k^2}
  \ll
  \langle T_k \rangle.
\end{equation*}
This shows that increasing the deconvolution order $N$ has the effect
of shortening the inertial range of the RADM.  This is consistent with
the energy balance given above, which suggests that increasing $N$
reduces the total energy and increases the effective viscosity.

To determine the kinetic energy distribution in the inertial range, we
begin by defining the average velocity of a size $k^{-1}$ eddy, and
relating it to model energy via
\begin{equation*}
  U_k := \langle (v_k,v_k) \rangle ^{1/2} \sim \left( \int_k^{2k}
    \frac{E_M(k)}{(1+\alpha^2 k^2) \widehat{D}_N(k)} \right)^{1/2}
  \sim \left( \frac{ k E_M(k) }{(1+\alpha^2 k^2) (N+1) }
  \right)^{1/2}. 
\end{equation*}
The total energy dissipation rate is determined by the energy
balance to be
\begin{equation*}
\epsilon_{M} := \langle \nu \| \nabla D v \|^2 \rangle,
\end{equation*}
and from the Kraichnan energy cascade theory gives that the
corresponding turnover time for eddies of this size is
\begin{equation*}
\tau_k := \frac{1}{k U_k} = \frac{ (1+k^2\alpha^2)^{1/2} (N+1)^{1/2}  }{k^{3/2}E_M(k)^{1/2}},
\end{equation*}
the model energy dissipation rate $\epsilon_M$ scales like
\begin{equation*}
\epsilon_M \sim \frac{1}{\tau_k}\int_k^{2k} E_M(k) \sim
\frac{k^{5/2}E_M(k)^{3/2} }{ (1+k^2\alpha^2)^{1/2} (N+1)^{1/2}  }. 
\end{equation*}
Solving for $E_M(k)$ provides the relation
\begin{equation*}
E_M(k) \sim \frac{\epsilon_M^{2/3} (1+\alpha^2 k^2)^{1/3}(N+1)^{1/3}   }{k^{5/3}   },
\end{equation*}
which implies a kinetic energy ($E = \frac{1}{2} \norm{v}^2$) spectrum
\begin{equation*}
  E(k) \sim \frac{ E_M(k) }{(1+\alpha^2 k^2) \widehat{D}_N(k) } \sim
  \frac{\epsilon_M^{2/3}    }{k^{5/3}(1+\alpha^2 k^2)^{2/3}
    (N+1)^{2/3}  }. 
\end{equation*}
As is the case for related models, such as those of $\alpha$-type or
the simplified Bardina model, the kinetic energy spectrum can be
divided into 2 parts.  If $k\,\alpha > O(1)$, then
\begin{equation*}
  E(k) \sim \frac{\epsilon_M^{2/3}    }{k^{5/3}(\alpha^2 k^2)^{2/3}
    (N+1)^{2/3}  } \sim \frac{\epsilon_M^{2/3}    }{k^{3} \alpha^2
    (N+1)^{2/3}  }, 
\end{equation*}
and if $k\,\alpha < O(1)$, then we get
\begin{equation*}
E(k) \sim \frac{\epsilon_M^{2/3}    }{k^{5/3} (N+1)^{2/3}  }.
\end{equation*}
These scalings suggest that for larger scales in the inertial range, a
$k^{-5/3}$ roll-off of energy is expected, but for smaller wave
numbers, the slope increases to $k^{-3}$.  This implies that
significantly less energy will be held in higher wave numbers,
suggesting the model is indeed more easily computable than the NSE,
which has a $k^{-5/3}$ roll-off of energy through its entire inertial
range.  
\subsection{Numerical testing of energy spectra}
To test some of these scaling results, a direct numerical simulation
(DNS) of the non-dimension\-alized RADM is performed for forced
turbulence, which is one of the most extensively simulated turbulent
flows (see e.g.,~\cite{Bowers13,CDKS93,TYKIM09,TYKIM12,TYKIM14}). In
particular, we study the energy spectrum of the model for
three-dimensional homogeneous and isotropic turbulent flow, on a
periodic cubic box with a side length $2\pi$, with Reynolds number $Re=200$. 

We use a pseudo-spectral
method for the spatial discretization of the model with full
de-aliasing, and a second-order Adams--Bashforth scheme for the time
stepping. In addition, we employ a forcing method used by Chen et
al.~\cite{CDKS93,CHMZ99} and She et al.~\cite{SJO1991}.  The
numerical forcing of a turbulent flow is the artificial addition of
energy at the large scales in a numerical simulation. Statistical
equilibrium is achieved by the balance between the input of kinetic
energy through the forcing and its output through the viscous
dissipation. 
Van Cittert deconvolution is used with order $N=0,1,2$, and we will
specify $N$ in the plots.

We first begin by testing the scaling law in the previous section,
which predicts that the kinetic energy scales with $k^{-5/3}$ at the
beginning of the inertial range, and transitions to $k^{-3}$ near the
end of the inertial range.  A plot of the resulting energy spectra
from a computation with $N=2$ and $\alpha=1/32$ at the resolution of
$128^3$ is shown in Figure~\ref{radmscaling}.  Along with the
spectrum, on the log-log plot are also lines with slopes -5/3 and -3,
and we observe that the spectrum appears to be in agreement with these
slopes near the beginning and end of the inertial range, respectively.
\begin{figure}[!ht]
\begin{center}
\includegraphics[width=.55\textwidth,height=0.5\textwidth, viewport=0 0
480 410, clip]{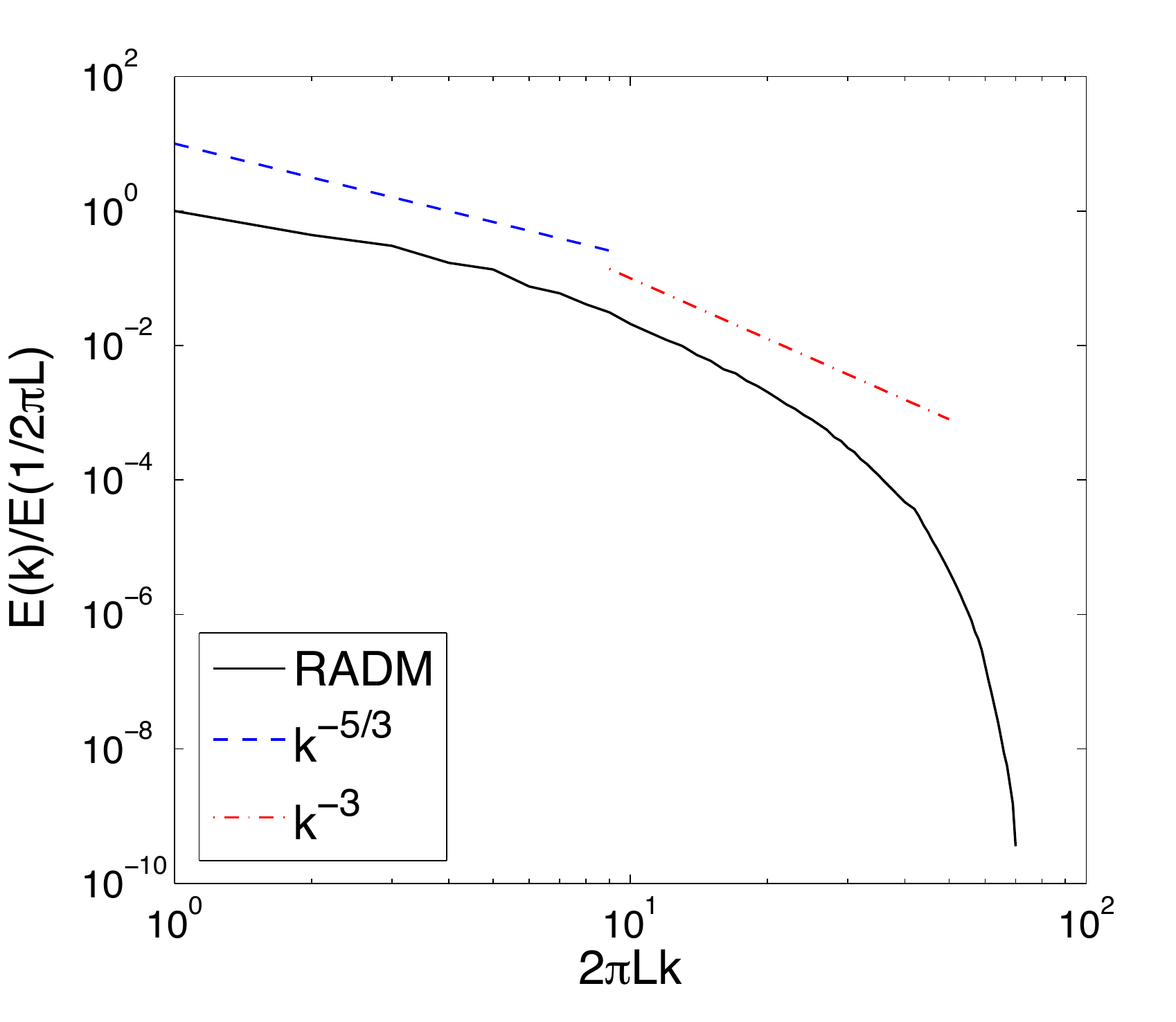}
\caption{ \label{radmscaling}  The energy spectrum of the RADM at
  $128^3$ resolution ($N=2$, $\alpha=1/32$).  Slopes on the $\log$-$\log$
  plot appear in agreement with the predicted slopes of $-5/3$ and $-3$ at
  the beginning and end of the inertial range, respectively.} 
\end{center}
\end{figure}


We also investigate the sensitivity of $N$ on the energy spectra of
the RADM.  Here, we compute with $128^3$ resolution, $\alpha=1/16$ and
$N=0,1,2$.  Results are shown in Figure~\ref{espchN}, and we observe
(as our analysis predicts) the RADM energy curves shorten as $N$
increases; that is, as $N$ is increased, there is less (or equal)
energy in each wave number across the entire spectrum.  This shows
there is less total energy in the $N=2$ system than $N=1$, and $N=1$
compared to $N=0$, which is in agreement with the above analysis.
\begin{figure}[!htbp]
\begin{center}
\includegraphics[width=.55\textwidth,height=0.5\textwidth, viewport=0 0
480 440, clip]{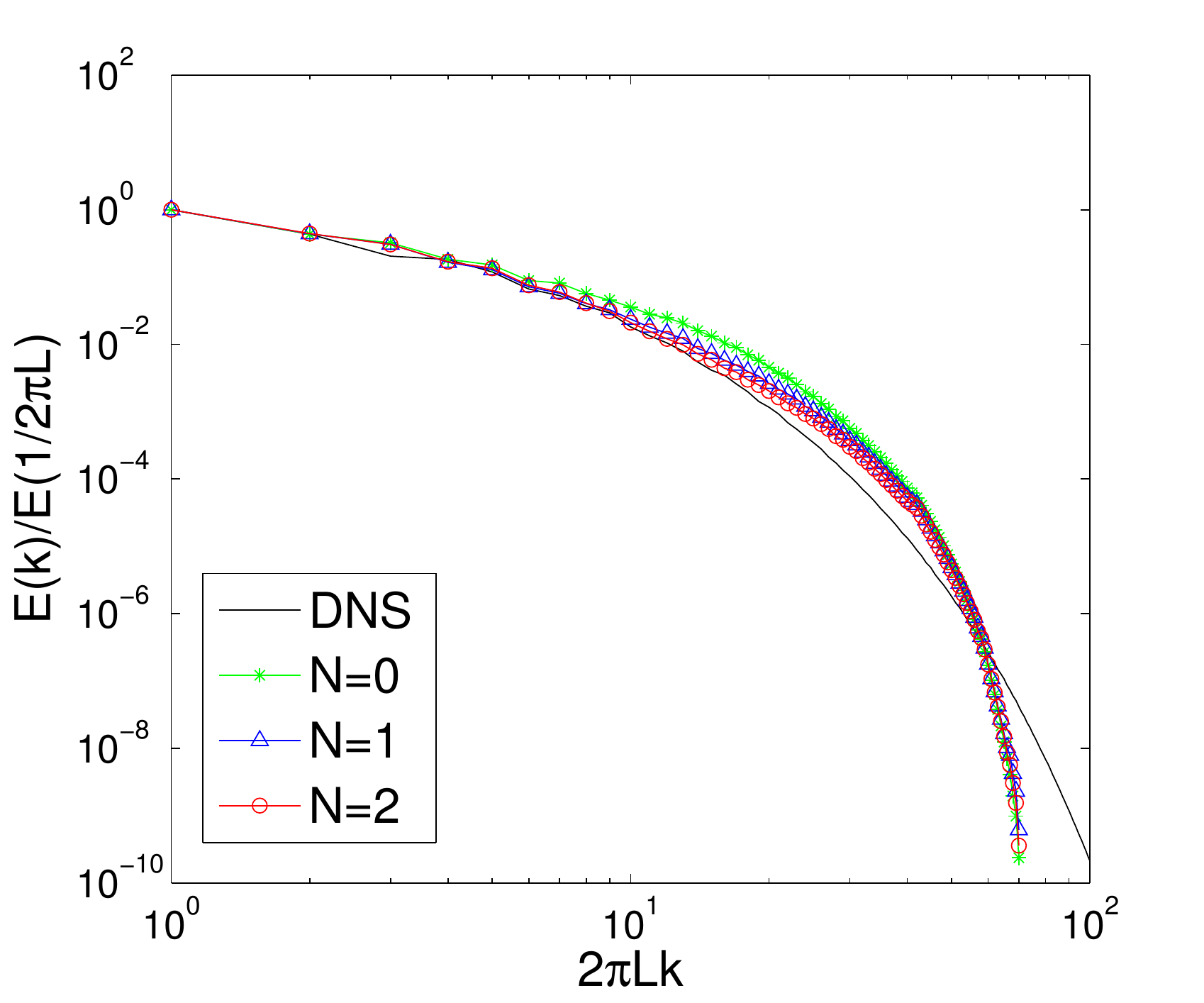}
%
 \end{center}
 \caption{Energy spectra for values of $N=0$, $1$, $2$ at
   $128^3$ resolution for $\alpha=1/16$, along
   with the energy spectrum obtained from the DNS of the NSE at
   $256^3$ resolution. The proposed model's energy spectra become
   closer to the DNS energy spectrum with increasing $N$.}
 \label{espchN}
\end{figure}


Lastly, we use this experiment setup to test the accuracy of the RADM
energy spectra against a DNS of the NSE.  In Figure~\ref{espchaADM},
we show the resulting energy spectra for the RADM with various values
of $\alpha$ for $N=2$ at the resolutions of $63^3$ and $128^3$ along
with the DNS energy spectrum obtained from the NSE at $256^3$
resolution. The energy spectra are provided for three cases of
$\alpha=1/8$, $1/16$, and $1/32$. As $\alpha$ decreases, the energy
spectra of the RADM become closer to the highly resolved DNS energy
spectrum.

\begin{figure}[!htbp]
\begin{center}
  \begin{picture}(500,220)
    \put(0,5){\epsfig{file=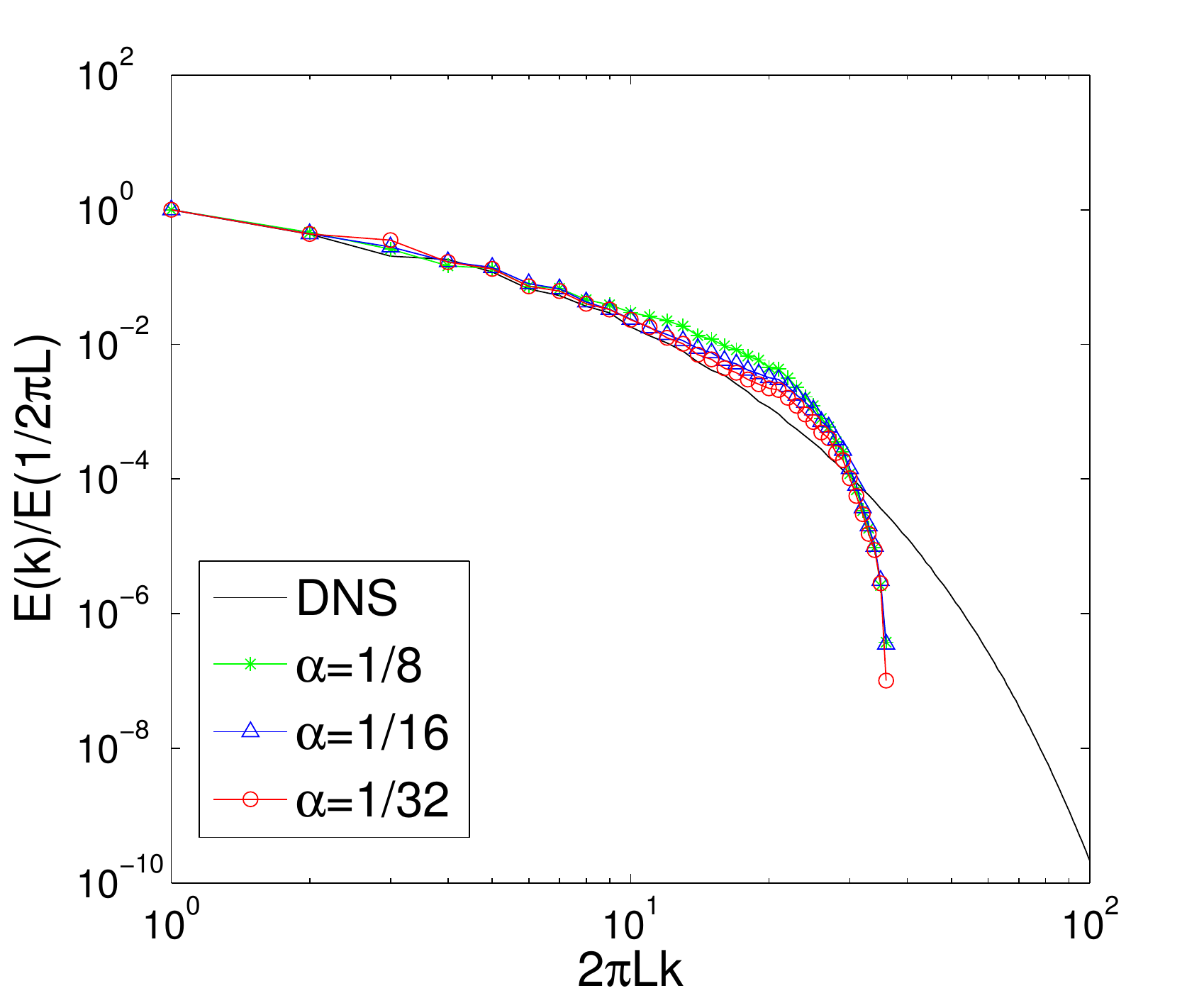,width=8.0cm}}
    \put(220,5){\epsfig{file=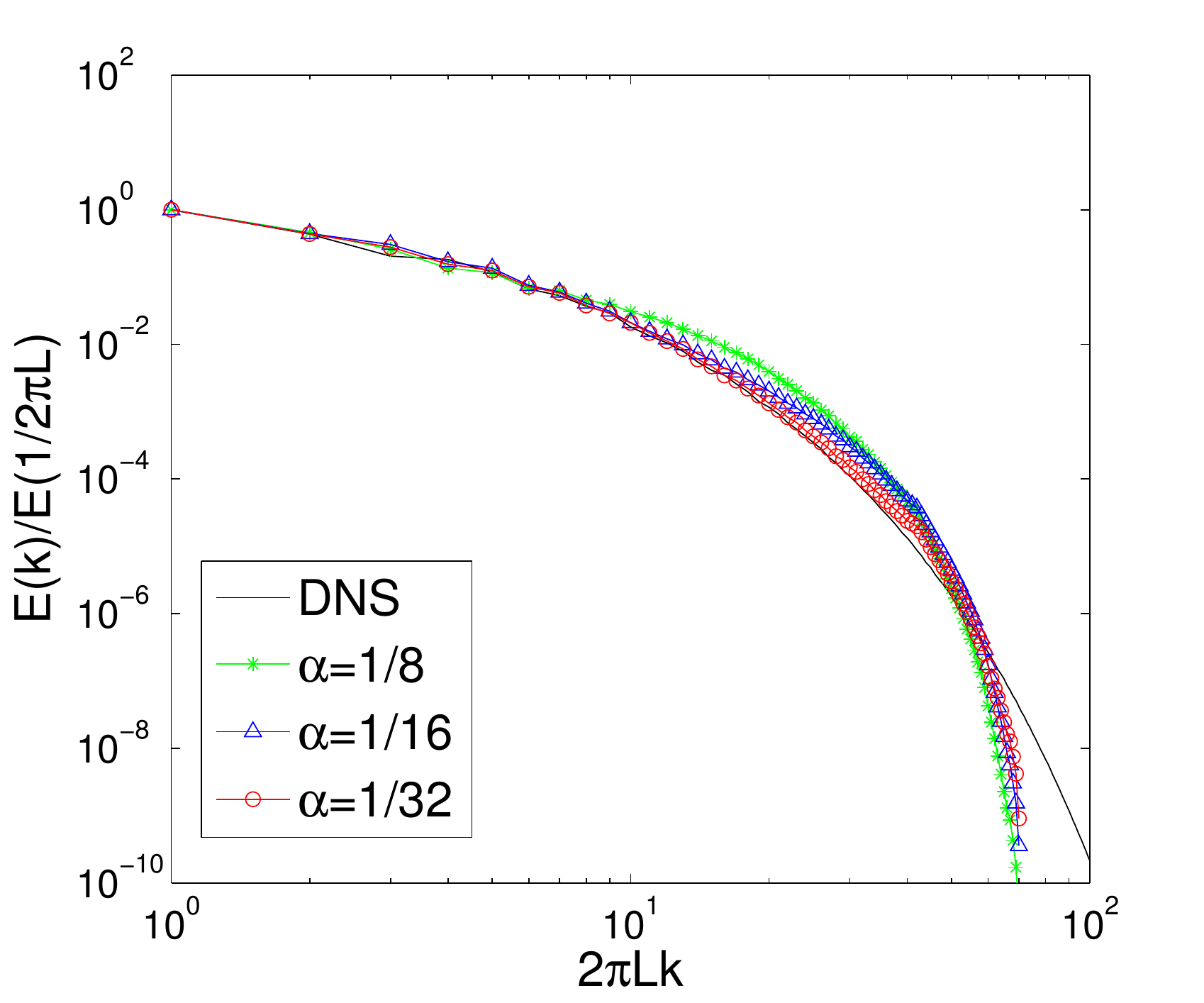,width=8.0cm}}
    \put(110,-5){(a)} \put(330,-5){(b)}
\end{picture}
\end{center}
 \caption{Energy spectra for various values of $\alpha=1/8$,
   $1/16$, and $1/32$ for $N=2$ at (a) $64^3$ and (b) $128^3$
   resolutions along with the energy spectrum obtained from the DNS of
   the NSE at $256^3$ resolution. The proposed model's energy spectra
   become closer to the DNS energy spectrum with decreasing $\alpha$.} 
 \label{espchaADM}
\end{figure}
\section{Existence of global attractors for the RADM}
\label{sec:attractors}
The long-time behavior of evolution equations can be characterized by
their attractors, which in many important cases are finite
dimensional. In particular, the notion of global attractor
(cf.~\cite{T97}) seems the most relevant in the context of the NSE. It
is well known that such an attractor exists for the 2D NSE, while the
lack of well-posedness in the 3D case is reflected in partial results
concerning weak or trajectory attractors. In the case of LES models,
the solution has naturally better regularity properties (even if a
certain hyperbolic behavior is expected), hence it is natural to
expect existence of such an object. In particular, for the NS-Voigt
model the analysis of these questions has been addressed
in~\cite{KT09,KLT2009}, where existence and regularity of the global
attractor and also determining modes are studied. Since the NS-Voigt
model is a peculiar system (pseudo-parabolic, see also results in
Section~\ref{sec:pulsatile}) with also non-viscous features, the proof
of existence of the global attractor uses techniques typical of
hyperbolic systems and the semigroup results to be only asymptotically
compact. In particular see the questions related with the regularity
of the solution in the space variables recalled in
Sec~\ref{sec:regularity}.

In this section we follow very closely the approach in~\cite{KT09} and
we just highlight the changes needed to adapt the proof to the new
RADM system. The main difference with respect to the latter reference
is that, due to the presence of approximate deconvolution operators,
we need to deal with the space-periodic case.

For simplicity of analysis and notation, following~\cite{KT09} in this
section we consider the model~\eqref{eq:cadm1}-\eqref{eq:cadm2} in the
equivalent form
\begin{eqnarray}
  v_t  + \alpha^2 A v_t + P(D v \cdot \nabla D v) + \nu A D v =
  Pf, \label{eq:radm1} 
\end{eqnarray}
where $P$ is the Helmholtz-Leray orthogonal projection in $L^2(\tore)$
onto $V_0$, and $A:=-P\Delta$ is the Stokes operator with periodic
boundary conditions. The existence of a global attractor will follow
from a well-known result, for which we refer for instance
to~\cite{T97}.
\begin{theorem}
  \label{thm:attract}
  Assume a semigroup $S(t):V^1\rightarrow V^1,\ 0< t_0\le t$ can be
  decomposed as
\begin{equation*}
S(t) = Y(t) + Z(t),
\end{equation*}
where $Z(t)$ is compact in $V^1$ for each $0< t_0\le t$.  Assume also
there exists a continuous $k:[t_0,\infty) \times \Real^+
\rightarrow\Real^+$ such that for every $r>0,\ k(t,r)\rightarrow 0$ as
$t \rightarrow \infty$ and that
\begin{equation*}
  \norm{Y(t)v}_{V} \le k(t,r),\quad \forall\, t\ge t_0,\ \forall\, v\in
  V^1:\ \norm{v}\le r.
\end{equation*}
Then $S(t):V^1\rightarrow V^1,\ t\ge 0$, is asymptotically compact.
\end{theorem}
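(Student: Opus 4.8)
The plan is to argue directly from the definition of asymptotic compactness: I take a sequence $\{v_n\} \subset V^1$ with $\norm{v_n} \le r$ and a sequence $t_n \to \infty$, and I must extract a subsequence of $\{S(t_n) v_n\}$ that converges in $V^1$. Since $V^1$ is complete, it is enough to show that the set $\{S(t_n) v_n : n \in \N\}$ is totally bounded. Before doing that I would record the one fact not contained in the abstract hypotheses but available here, namely that orbits of bounded sets remain bounded: the energy identity~\eqref{radme1} (equivalently, the existence of a bounded absorbing ball for $S(t)$ in $V^1$) provides $R = R(r) > 0$ with $\norm{S(s) v_n}_{V^1} \le R$, and in particular $\norm{S(s) v_n} \le R$, for every $s \ge 0$ and every $n$.

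The heart of the argument is a splitting that uses the semigroup property to replace the varying time $t_n$ by a fixed time at which the decomposition applies. Fix $T \ge t_0$. For every $n$ large enough that $t_n \ge T$ write $S(t_n) v_n = S(T)\big(S(t_n - T) v_n\big)$ and apply the hypothesis at time $T$:
\begin{equation*}
  S(t_n) v_n = Y(T)\big(S(t_n - T) v_n\big) + Z(T)\big(S(t_n - T) v_n\big).
\end{equation*}
Since $\norm{S(t_n - T) v_n} \le R$, the first term satisfies $\norm{Y(T)\big(S(t_n - T) v_n\big)}_{V^1} \le k(T, R)$, and since $Z(T)$ is a compact operator on $V^1$ and $\{S(t_n - T) v_n : t_n \ge T\}$ is bounded in $V^1$, the second term ranges over a relatively compact --- hence totally bounded --- subset of $V^1$.

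Finally I would conclude with a routine totally-bounded argument. Given $\eps > 0$, choose $T \ge t_0$ with $k(T, R) < \eps$, which is possible because $k(T, R) \to 0$ as $T \to \infty$; pick a finite $\eps$-net $\{w_1, \dots, w_p\}$ for $\{Z(T)\big(S(t_n - T) v_n\big) : t_n \ge T\}$. Then every $S(t_n) v_n$ with $t_n \ge T$ is within $2\eps$ of some $w_j$, and adjoining the finitely many exceptional elements $S(t_n) v_n$ with $t_n < T$ produces a finite $2\eps$-net for the whole sequence. Hence $\{S(t_n) v_n\}$ is totally bounded and therefore relatively compact in $V^1$, which is the asserted asymptotic compactness. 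The only genuinely non-routine point is the boundedness of orbits used above: it is not part of the abstract statement and must be supplied from the a priori estimates for the RADM (Theorem~\ref{thm:existence} and~\eqref{radme1}), while everything else is soft functional analysis together with the semigroup property.
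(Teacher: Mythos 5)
Your argument is correct, and you should know that the paper does not prove Theorem~\ref{thm:attract} at all: it is quoted as a known abstract criterion with a reference to Temam~\cite{T97}, and it is the same tool used by Kalantarov--Titi~\cite{KT09} for NS-Voigt. Your proof is essentially the classical one, with two points worth highlighting. First, freezing the splitting time via the semigroup property, $S(t_n)v_n = Y(T)\bigl(S(t_n-T)v_n\bigr) + Z(T)\bigl(S(t_n-T)v_n\bigr)$, is exactly what the stated hypotheses require: here $Z(t)$ is only assumed compact (bounded sets mapped to relatively compact sets) for each fixed $t\ge t_0$, whereas Temam's formulation assumes relative compactness of $\bigcup_{t\ge t_0}Z(t)B$; with the weaker per-$t$ hypothesis, splitting directly at time $t_n$ would leave you with $\{Z(t_n)v_n\}$, whose precompactness does not follow, so your detour through a fixed $T$ is the right move. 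Second, you correctly observe that the uniform bound $\norm{S(s)v_n}_{V^1}\le R$ is not a consequence of the hypotheses as literally stated; the criterion tacitly presupposes that orbits of bounded sets stay bounded (equivalently, it is invoked only after an absorbing-ball estimate). In the paper this is exactly what is established before the criterion is applied, via the Gronwall argument based on the energy balance~\eqref{radme1} giving the absorbing ball $B(0,M_1)\subset V^1$, so importing that bound is precisely how the theorem is meant to be used. With it in hand, your $\eps$-net bookkeeping (choose $T$ with $k(T,R)<\eps$, take a finite net for the precompact image of $Z(T)$, and absorb the finitely many indices with $t_n<T$) gives total boundedness, hence relative compactness in the complete space $V^1$, which is the claimed asymptotic compactness.
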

\medskip

The main result of this section is the following.
\begin{theorem}
  Let $v_0\in V^1$ and assume the forcing is only spatially dependent,
  $Pf=Pf(x)\in V^{-1}$.  Let $S(t):V^1 \rightarrow V^1,\ t\in
  \Real^{+}$ be the continuous semigroup generated by the
  problem~\eqref{eq:radm1} (the existence of which is guaranteed by
  Theorem~2.1 of~\cite{R13b}).  Then $S(t)$ has a global attractor.
\end{theorem}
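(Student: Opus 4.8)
The plan is to apply Theorem~\ref{thm:attract} (the standard asymptotic-compactness criterion), so the main work is to produce the decomposition $S(t) = Y(t) + Z(t)$ with $Y(t)$ decaying and $Z(t)$ compact, and to establish the dissipativity (existence of an absorbing ball) that supplies the bound $\norm{v}\le r$ needed to invoke the theorem. First I would establish the existence of an absorbing set in $V^1$: testing \eqref{eq:radm1} with $Dv$ and using the energy identity \eqref{radme1}, together with the spectral bounds $1\le\widehat D_N(k)\le N+1$ from Lemma~\ref{lem:lower_bound} and the Poincar\'e inequality in $V^1$, gives $\frac{d}{dt}E_M(t) + c\,E_M(t) \le C\norm{Pf}_{V^{-1}}^2$ for some $c>0$ depending on $\nu,\lambda,\alpha,d_0,d_1$; Gr\"onwall then yields a bounded absorbing ball $\mathcal B\subset V^1$. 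Because $\norm{D^{1/2}v}$, $\norm{\nabla D^{1/2}v}$ and $\norm{\nabla v}$ are all equivalent norms on $V^1$ (by the two-sided bounds on $\widehat D_N$ and the fact that $D_N$ commutes with differentiation), this is a genuine $V^1$-absorbing set.

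Next I would set up the splitting exactly as in~\cite{KT09}. Writing the problem as $(\,I+\alpha^2 A\,)v_t = -\,\nu A D v - P(Dv\cdot\nabla Dv) + Pf$, decompose the solution as $v = y + z$ where $y$ solves the linear homogeneous problem $(I+\alpha^2A)y_t + \nu A D y = 0$ with $y(t_0)=v(t_0)$, and $z$ solves $(I+\alpha^2A)z_t + \nu A D z = -P(Dv\cdot\nabla Dv) + Pf$ with $z(t_0)=0$; set $Y(t-t_0)v(t_0):=y(t)$ and $Z(t-t_0)v(t_0):=z(t)$. For $Y$: diagonalizing in the Fourier/eigenbasis of $A$, each mode satisfies $(1+\alpha^2|k|^2)\dot y_k + \nu|k|^2\widehat D_N(k)\,y_k=0$, so $|y_k(t)|^2 = |y_k(t_0)|^2 e^{-2\mu_k(t-t_0)}$ with $\mu_k = \nu|k|^2\widehat D_N(k)/(1+\alpha^2|k|^2)\ge \nu|k|^2/(1+\alpha^2|k|^2)\ge \nu\lambda/(1+\alpha^2\lambda)>0$ uniformly in $k$; summing with the $|k|^2$ weights gives $\norm{\nabla Y(t)v(t_0)}\le e^{-\mu(t-t_0)}\norm{\nabla v(t_0)}$, which furnishes the required function $k(t,r)=C r\,e^{-\mu(t-t_0)}\to 0$. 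For $Z$: I would show $z$ lies in a bounded set of $V^{1+s}$ for some $s>0$ (hence $Z(t)$ is compact in $V^1$ by Rellich), by testing the $z$-equation with $A^s z_t$ or $A^{1+s}z$ and bounding the right-hand side: the forcing contributes $\norm{Pf}_{V^{-1+s}}$ (which we may assume finite, or absorb by taking $s$ small since $Pf\in V^{-1}$ and using only the gain from inverting $I+\alpha^2A$), and the nonlinear term $P(Dv\cdot\nabla Dv)$ is controlled using $v\in L^\infty(t_0,\infty;V^1)$ from the absorbing-set step, the boundedness of $D_N$ on every $V^s$, and the Sobolev product estimate $\norm{Dv\cdot\nabla Dv}_{V^{-1/2}}\le C\norm{Dv}_{V^1}^2$ in 3D — the smoothing $(I+\alpha^2A)^{-1}:V^{-1/2}\to V^{3/2}$ then pushes $z_t$, and hence $z$, into a space compactly embedded in $V^1$.

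The main obstacle, as the authors themselves flag in the remarks preceding Section~\ref{sec:regularity}, is the regularity needed for the compact part $Z(t)$: because the system is only pseudo-parabolic (no genuine parabolic smoothing — the viscous term $\nu A D v$ competes with the elliptic operator $I+\alpha^2 A$ on the time derivative, so one does not gain two spatial derivatives in time), one cannot expect arbitrary spatial smoothing, and the gain available is essentially only the one derivative coming from $(I+\alpha^2A)^{-1}$ acting on the nonlinearity plus whatever the data affords. The delicate point is to verify that this limited gain is still strictly positive, i.e. that $z$ genuinely lands in $V^{1+s}$ with $s>0$ rather than merely in $V^1$; this requires estimating the nonlinear term in a negative-order space strictly better than $V^{-1}$ (e.g. $V^{-1/2}$, using that $Dv\in V^1\hookrightarrow L^6$ controls $Dv\cdot\nabla Dv$ in $L^{3/2}\hookrightarrow V^{-1/2}$), and correspondingly assuming (or exploiting) that $Pf$ is slightly better than $V^{-1}$, exactly as done in~\cite{KT09} for NS-Voigt. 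Apart from that, the argument is a routine transcription of~\cite{KT09}, with Lemma~\ref{lem:lower_bound} used throughout to absorb the deconvolution operator into harmless constants, and Theorem~\ref{thm:attract} then yields asymptotic compactness; combined with the absorbing ball and the continuity of $S(t)$ on $V^1$, the existence of the global attractor $\mathcal A=\omega(\mathcal B)$ follows from the classical theory in~\cite{T97}.
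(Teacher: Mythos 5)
Your proposal follows essentially the same route as the paper: an absorbing ball in $V^1$ obtained by testing with $Dv$ and Gronwall, then the Kalantarov--Titi splitting $S(t)=Y(t)+Z(t)$ with the linear part decaying exponentially (your Fourier-mode computation is just a diagonalized version of the paper's energy estimate) and the remainder $z$ shown bounded in $V^{3/2}$ by estimating $P(Dv\cdot\nabla Dv)$ in $V^{-1/2}$, after which Theorem~\ref{thm:attract} and the classical theory of~\cite{T97} give the attractor. Your remark that the forcing must effectively be measured in $V^{-1/2}$, slightly better than the stated $V^{-1}$, is precisely what the paper's test of the $z$-equation with $A^{1/2}z$ uses, so your hedge there matches, rather than deviates from, the paper's argument.
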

\begin{proof}
  As usual we begin by showing the existence of an absorbing ball.
  Then, we show that the assumptions of Theorem~\ref{thm:attract} are
  satisfied, to conclude that a global attractor exists.  We proceed
  formally, but calculations again can be completely justified by a
  Faedo-Galerkin approximation.

  We multiply~\eqref{eq:radm1} by $Dv$ and proceed as above for the
  energy inequality analysis, along with Cauchy-Schwarz, and Young
  inequalities on the forcing term to obtain the bound
\begin{equation*}
  \frac{d}{dt} \left( \alpha^2\norm{\nabla D^{1/2}v}^2 + \norm{D^{1/2}v}^2 \right)
  + \nu\norm{\nabla Dv}^2 \le \frac{1}{\nu} \norm{Pf}_{-1}^2.
\end{equation*}
Using that $D$ is positive and self-adjoint, along with assumptions on its boundedness, gives
\begin{equation*}
\frac{d}{dt} \left( \alpha^2\norm{\nabla D^{1/2}v}^2 + \norm{D^{1/2}v}^2 \right)
 + \nu \,d_0\, \norm{\nabla D^{1/2} v}^2 \le\frac{1}{\nu} \norm{Pf}_{-1}^2,
\end{equation*}
and then applying Poincar\'e provides the bound
\begin{equation*}
\frac{d}{dt} \left( \alpha^2\norm{\nabla D^{1/2}v}^2 + \norm{D^{1/2}v}^2 \right)
 + {\nu d_0}\left( \frac{\alpha^{2}}{\alpha^2} \norm{\nabla D^{1/2}
     v}^2 + \lambda \norm{ D^{1/2} v}^2 \right)  
  \le \frac{1}{\nu} \norm{Pf}_{-1}^2.
\end{equation*}
Setting $c_0= \frac{\nu d_0}{2} \min \{\alpha^{-2},\lambda \}$, we get
\begin{equation*}
\frac{d}{dt} \left( \alpha^2\norm{\nabla D^{1/2}v}^2 + \norm{D^{1/2}v}^2 \right)
 + c_0 \left( \alpha^2 \norm{\nabla D^{1/2} v}^2 +  \norm{ D^{1/2} v}^2 \right) 
  \le \frac{1}{\nu} \norm{Pf}_{-1}^2,
\end{equation*}
and thus from the Gronwall inequality,
\begin{equation*}
\limsup_{t\rightarrow +\infty} \left( \alpha^2\norm{\nabla D^{1/2}
    v(t)}^2 + \norm{D^{1/2} v(t)}^2 \right)
\le 
 \frac{\norm{Pf}_{-1}^2}{\nu c_0}.
\end{equation*}
Again applying the lower boundedness assumption on $D$, this reduces to
%
\begin{equation*}
  \limsup_{t\rightarrow \infty} \left( \alpha^2\norm{\nabla v(t)}^2 + \norm{v(t)}^2 \right)
  \le 
  \frac{2\max\{\alpha^2,\lambda^{-1}\}} {\nu^2\,d_0^2}\norm{Pf}_{-1}^2:=M_1^2,
\end{equation*}
which implies that $S(t):V^1\rightarrow V^1,\ t\in \Real^+$ has as
absorbing ball $B_1:=B(0,M_1)\subset V^1$,
%
and we have the uniform estimate
\begin{equation*}
\norm{v(t)}_1\le M_1,
\end{equation*}
for $t$ large enough.

We now show the assumptions of Theorem~\ref{thm:attract} are satisfied,
following analysis similar to~\cite{KT09}.  Observe that $S(t)$ can be
represented as
\begin{equation*}
S(t)v_0 = Y(t)v_0 + Z(t)v_0,
\end{equation*}
where $Y(t)$ is the semigroup generated by the linear (which is linear
since also $D$ is a linear operator when the van Cittert deconvolution
is used) problem
\begin{eqnarray}
  y_t + \alpha^2A y_t+ \nu ADy  = 0, \label{eq:linear1}
  \\ 
  y(0)=v_0,\label{eq:linear2}
\end{eqnarray}
while  $z(t)=Z(t)v_0$ solves
\begin{eqnarray}
  z_t + \alpha^2Az_t + \nu ADz = Pf - P(Dv\cdot\nabla
  Dv), \label{eq:nonlinear1} 
  \\
  z(0)=0,\label{eq:nonlinear2}
\end{eqnarray}
with $v$ solution of~\eqref{eq:radm1}.  We take the $L^2$-inner product
of~\eqref{eq:linear1} with $y$ to find
\begin{equation*}
  \frac{d}{dt}\left( \norm{y(t) }^2 + \alpha^2 \norm{\nabla y(t)}^2
  \right) + 2\nu\norm{\nabla D^{1/2} y}^2 = 0, 
\end{equation*}
and then using that $D$ is positive and bounded from below, along with the
Poincar\'e inequality, we obtain as in the previous calculations,
%
\begin{equation*}
\frac{d}{dt}\left( \norm{y(t) }^2 + \alpha^2 \norm{\nabla y(t)}^2
\right) + c_0 \left( \lambda \norm{y(t)}^2 + \alpha^2 \norm{\nabla
    y(t)}^2   \right) \le 0,
\end{equation*}
which immediately implies
\begin{equation*}
\frac{d}{dt}\left( \norm{y(t) }^2 + \alpha^2 \norm{\nabla y(t)}^2 \right)
\le 
e^{-c_0 t}
\left( \norm{ v_0 }^2 + \alpha^2 \norm{\nabla v_0}^2 \right).
\end{equation*}
Therefore  the semigroup $Y(t)$ is exponentially contractive.

By using the properties of $D$, and standard inequalities in Sobolev
spaces (cf.~\cite[Eq.~(3.11)]{KT09}) it follows that
\begin{equation*}
  \begin{aligned}
    \| P( D v\cdot\nabla D v)\|_{V^{-1/2}}&=\sup_{\phi\in
      V^1,\,\|A^{1/4}\phi\|=1} \Big( v\cdot\nabla D v,P\phi\Big)
    \\
    &    \leq C\| Dv\|_{V^1}^2\|A^{1/4}\phi\|
    \\
    &\leq    C d_1^2\|v\|_{V^1}^2\|A^{1/4}\phi\|,
  \end{aligned}
\end{equation*}
hence showing that, since $v$ is a weak solution to~\eqref{eq:cadm1},
then 
\begin{equation*}
   P( D v\cdot\nabla D v)\in L^\infty(0,T;V^{-1/2}),
\end{equation*}
for all positive $T$, and with bounds independent of $T$, but
depending essentially on $\|D\|$.

Then we take~\eqref{eq:nonlinear1}-\eqref{eq:nonlinear2} and we test
it by $A^{1/2}z$ (procedure which can be again justified by
approximation with Faedo-Galerkin method). We observe that
$\|z\|_{V^s}=(A^sz,z)=\|A^{s/2}z\|^2$, by definition. Hence, by noting
that $Pf - P(Dv\cdot\nabla Dv)\in L^\infty(0,T;V^{-1/2})$ 
we get
\begin{equation*}
  \begin{aligned}
  &\frac{1}{2}\frac{d}{dt}\left(\alpha^2\|A^{3/4}z\|^2+\|A^{1/4}z\|^2\right)+\nu\|A^{3/4}D^{1/2}
  z\|^2
  \\
  &\quad =\frac{1}{2}\frac{d}{dt}\left(\alpha^2\|z\|^2_{V^{3/2}}+\|z\|^2_{V^{1/2}}\right)+\nu\|D^{1/2}
  z\|^2_{V^{3/2}}
  \\
  &  \qquad = (Pf - P(Dv\cdot\nabla Dv),A^{1/2}z)
  \\
  & \qquad = (A^{-1/4}(Pf - P(Dv\cdot\nabla Dv)),A^{3/4}z)
  \\
&\qquad \leq(\|f\|_{V^{-1/2}}+\|P(Dv\cdot\nabla Dv))\|_{V^{-1/2}})\|z\|_{V^{3/2}}.
\end{aligned}
\end{equation*}
Using previous estimates we obtain
\begin{equation*}
  \frac{d}{dt}\left(\alpha^2\|z\|^2_{V^{3/2}}+\|z\|^2_{V^{1/2}}\right)+\nu\|D^{1/2}
  z\|^2_{V^{3/2}}
   \leq(\frac{C}{\nu}\left(
\|f\|_{V^{-1/2}}^2+d_1^2\|v\|^4_{V^{1}}\right),
\end{equation*}
and the bound on $\|v\|_{V^1}\leq M_1$ implies that $z$ is bounded in
$V^{3/2}$.
  This means that $Z(t)$ maps $V^1$ into $V^{3/2}$,
and thanks to the compact embedding $V^{3/2}\subset V^1$, $Z(t)$ is
compact.  Thus the assumptions of Theorem~\ref{thm:attract} are satisfied,
and we conclude that $S(t)$ is asymptotically compact.  By~\cite{T97},
we conclude that $S(t)$ has a global attractor in $V^1$; the rest of
the proof follows as in~\cite{KT09}. With the same arguments one can
also show that if the force is smoother, say in $Pf\in V^1$, then the
attractor is a compact set in $V^2$.
\end{proof}
\section{Pipe flows}
\label{sec:pipe}
In this section we perform some analytical computations with very
classical tools, in order to focus on some special analytical features
of the solutions of Voigt models and also to better understand the
possible role of the RADM (and thus the NS-Voigt system for $D_0=I$)
in modeling of visco-elastic flows.  We recall that prior to turbulence
modeling, NS-Voigt equations appeared as a linear model for certain
non-Newtonian fluids, see a series of papers by
Oskolkov~\cite{Osk1973,Osk1982}

In order to understand, at least qualitatively, the effect of the
(non-viscous) damping term $-\alpha^{2} \Delta u_t$ on the behavior of
solutions, we restrict to the very simple setting of: a) fully
developed flows, that is the velocity is directed only along the axis
of the pipe; and b) we restrict to the NS-Voigt case; that is, assume
$D=D_0=I$.  The deconvolution order $N>0$ could be used, but details
would change for each specific deconvolution operator, and also the
analysis will become (even more) technical and also boundary
conditions for filtering at the wall would need to be specified.
Under the two above assumptions we develop analytical solutions, and
even if we are aware that these solutions are not turbulent and
describe only the linear behavior, they constitute a good source of
data for debugging complex codes, as only very few analytical
solutions are known to exist.  Moreover, analytical solutions can be
used to gain some physical insight into the model's solutions.  On the
other hand, we recall that a remarkable feature of the NS-Voigt
equations is that is one of the few LES models which can be fully
treated with Dirichlet boundary conditions and with given external
force, see~\cite{CLT2006,Larios-thesis}. Since the NS-Voigt equations
can be treated in the presence of solid boundaries with Dirichlet boundary
conditions, it is natural to investigate some class of exact solutions
for NS-Voigt, as in the case of the NSE, to directly compare the
results.
\subsection{Remarks on the regularity  of the solution}
\label{sec:regularity}
In this section we make some remarks on the solution of the
NS Voigt equations, but similar reasoning can be transferred
to the RADM (at least in the space periodic-setting, while here we
have a problem with solid boundaries and zero Dirichlet
conditions). In particular, as remarked in Larios~\cite{Larios-thesis}
the NS-Voigt have a pseudo-parabolic behavior, as the abstract
equations considered in Carroll and Showalter~\cite{CS1976}.  One of
the most interesting point is that the solution of the NS-Voigt
preserve the same regularity as the initial datum. Contrary to the
NSE for the solution of the NS-Voigt it is not
possible to show an improvement of the regularity, especially for what
concerns the space-variables. We recall that for the NSE (or more
generally parabolic equations), as soon as a weak solution is strong,
then it becomes smooth (even $C^\infty$ if the force and the boundary
of the domain are so) for all positive times. This has particular
consequences on the fine properties of weak solution of the NSE, when
obtained as limits of solutions to the NS-Voigt equations as
$\alpha\to0^+$, see~\cite{BS2015}.

Here we show with a particular simple example that the nonlinear
NS-Voigt cannot produce an improvement of the regularity, by making
some very explicit computations on the spectrum of the Laplace
operator. This can be also used to shed some light in the different
behavior of the NS-Voigt with respect to the NSE, concerning some very
special properties.

Let us consider the NS-Voigt system in a special physical
situation. We have a channel with cross section $E$ and with axis
directed along the $x_3$-direction.  We consider an incompressible
fluid, modeled by NS-Voigt equations in a semi-infinite straight pipe
$P=E\times \R^+\subset\R^3$. In a reference frame with $z$ (we call
$z:=x_3$ for historical reasons) directed along the pipe axis and
$x:=(x_1,x_2)$ belonging to an orthogonal plane, the NS-Voigt
equations read
\begin{equation}
  \label{eq:pipe-flow}
  \begin{aligned}
    \vec{v}_t-\alpha^2 \Delta\vec{v}_t+(\vec{v}\cdot\nabla)\,\vec{v}-\nu\Delta
    \vec{v}+\nabla p=\vec{f}&\qquad (x,z)\in E\times\R^+,\; t\in\R^+,
    \\
    \nabla\cdot \vec{v}=0&\qquad (x,z)\in E\times\R^+,\; t\in\R^+,
    \\
    \vec{v}=0&\qquad (x,z)\in \partial E\times\R^+,\; t\in\R^+,
  \end{aligned}
\end{equation}
where $\vec{v}(t,x,z)$ and $p(t,x,z)$ respectively denote velocity and
pressure, and $\nu>0$ represents kinematic viscosity.
\begin{remark}
  In this subsection and later on also in Section~\ref{sec:pulsatile}
  we use the symbol $\vec{v}(t,x,z)$ to remember that the velocity is
  a vector field and that we will look for special solutions, where
  the axial velocity $w(t,x,z)$ is the only non-zero one.
\end{remark}

By following classical calculations dating back to Hagen and
Poiseuille, we look for fully-developed solutions (also named
Poiseuille-type solutions):
\begin{equation*}
  \vec{v}(t,x,z) = (0,0,w(t,x))
    \qquad\text{and}\qquad
  p(t,x,z) = -\lambda(t,x,z)+p_0(t),
\end{equation*}
where $p_0(t)$ denotes an arbitrary function of time. The
Poiseuille-type \textit{ansatz} implies that the convective term
cancels out identically, and this motivates the statement that the
results will concern only the linear behavior of solutions. Moreover,
it is easy to deduce from the equations that the pressure is
$p(t,z)=-\lambda(t)\, z$. Finally, the dependence of $w$ on the space
variables $x_1$ and $x_2$ allows us to consider a problem reduced to
the cross-section $E$: In the so-called ``direct problem,'' without
external force, a given pressure-drop is assigned and the problem is the
following: Given $\lambda(t)$, find $w(t,x)$ such that
\begin{equation}
  \label{eq:basic_flow}
  \begin{cases}
    w_t(t,x) - \alpha^2\Delta_x w_t(t,x)-\nu\Delta_x w(t,x) =
    \lambda(t),&\qquad x\in 
    E,\; t\in\R^+,
    \\
    w(t,x)=0&\qquad x\in \partial E,\; t\in\R^+,
    \\
    w(0,x)=w_{0}(x)&\qquad x\in E,
  \end{cases}
\end{equation}
where $\Delta_x$ denotes the Laplacian with respect only to the variables
$x_1$ and $x_2$.  

In this particular setting the nonlinear term is identically
vanishing, but we still have a boundary value problem. We will start
by making some explicit calculations in the case $\lambda(t)$ is
extremely smooth, say $\lambda\equiv1$ and we assign an initial datum
$w_0(x)\in H^{1}_{0}(E)$. This corresponds to the classical pressure drop problem with
a constant gradient superimposed to sustain the flow, but as we will
see this assumption is completely inessential. We have the following
theorem.
\begin{theorem}
\label{thm:smoothness}
Let $E\subset\R^2$ be smooth and bounded. Let 
$\vec{v}(0,x,z)=(0,0,w_0(x))$ be given such that $w_0(x)\in H^1_0(E)$, and
let $\lambda=1$. Then,
 there exists a unique solution
to~\eqref{eq:basic_flow} such that
\begin{equation*}
  w\in L^\infty(0,T;H^1_0(E))\ \quad\forall\,T>0
\end{equation*}
hence a unique solution to the problem~\eqref{eq:pipe-flow} under the
Hagen-Poiseuille ansatz. Furthermore, if $w_0\not\in H^{s}(E)$ for any $s>1$, then
also $w(t,x)\not\in H^s(E)$ for all $t>0$.
\end{theorem}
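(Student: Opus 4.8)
The plan is to diagonalize the problem in the eigenbasis of $-\Delta_x$ on $E$ with Dirichlet boundary conditions, since the Poiseuille ansatz has reduced everything to a linear scalar problem with no nonlinearity. Let $\{e_j\}_{j\ge 1}\subset H^1_0(E)$ be the orthonormal $L^2(E)$-eigenfunctions with $-\Delta_x e_j=\mu_j e_j$, $0<\mu_1\le\mu_2\le\cdots\to\infty$, and write $w(t,x)=\sum_j c_j(t)e_j(x)$, $w_0=\sum_j c_j^0 e_j$, and expand the constant $\lambda\equiv 1$ as $1=\sum_j b_j e_j$ with $b_j=\int_E e_j\,dx$. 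Projecting~\eqref{eq:basic_flow} onto $e_j$ turns the PDE into the scalar linear ODE
\begin{equation*}
  (1+\alpha^2\mu_j)\,\dot c_j(t)+\nu\mu_j\, c_j(t)=b_j,\qquad c_j(0)=c_j^0,
\end{equation*}
whose unique solution is
\begin{equation*}
  c_j(t)=c_j^0\,e^{-\frac{\nu\mu_j}{1+\alpha^2\mu_j}t}
  +\frac{b_j}{\nu\mu_j}\Bigl(1-e^{-\frac{\nu\mu_j}{1+\alpha^2\mu_j}t}\Bigr).
\end{equation*}

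For existence, uniqueness, and the regularity $w\in L^\infty(0,T;H^1_0(E))$: one checks that $\|w(t)\|_{H^1}^2\sim\sum_j\mu_j|c_j(t)|^2$, and the decay factor $\gamma_j:=\nu\mu_j/(1+\alpha^2\mu_j)$ is bounded uniformly in $j$ (it converges to $\nu/\alpha^2$), so $e^{-\gamma_j t}\le 1$ gives $\sum_j\mu_j|c_j^0 e^{-\gamma_j t}|^2\le\|w_0\|_{H^1}^2$. For the forced part one uses $|1-e^{-\gamma_j t}|\le\min(\gamma_j t,1)\le C/\mu_j$ (absorbing $t\le T$), so $\mu_j|b_j/(\nu\mu_j)|^2|1-e^{-\gamma_j t}|^2\lesssim |b_j|^2/\mu_j^2$, and since $\sum_j|b_j|^2=\|1\|_{L^2(E)}^2<\infty$ this sum converges; in fact it is bounded uniformly in $t$. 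Adding the two pieces gives the uniform-in-time $H^1_0$ bound, and uniqueness is immediate from linearity (subtract two solutions, $b_j=0$ and $c_j^0=0$ force $c_j\equiv0$). The reduction to~\eqref{eq:pipe-flow} under the Hagen–Poiseuille ansatz was already carried out in the text preceding the theorem.

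For the final, more interesting assertion — that no smoothing occurs — fix $t>0$ and $s>1$ and suppose for contradiction that $w(t,\cdot)\in H^s(E)$, i.e. $\sum_j\mu_j^{s}|c_j(t)|^2<\infty$. Solve the ODE solution for the initial coefficient:
\begin{equation*}
  c_j^0=e^{\gamma_j t}\,c_j(t)-\frac{b_j}{\nu\mu_j}\bigl(e^{\gamma_j t}-1\bigr).
\end{equation*}
Now the key point is that $\gamma_j t=\dfrac{\nu\mu_j t}{1+\alpha^2\mu_j}\le\dfrac{\nu t}{\alpha^2}=:C_t$ is \emph{bounded} in $j$, hence $e^{\gamma_j t}\le e^{C_t}$ and $(e^{\gamma_j t}-1)/(\nu\mu_j)\le C_t e^{C_t}/(\nu\mu_j)\lesssim 1/\mu_j$. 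Therefore
\begin{equation*}
  \sum_j\mu_j^{s}|c_j^0|^2
  \lesssim e^{2C_t}\sum_j\mu_j^{s}|c_j(t)|^2+\sum_j\mu_j^{s-2}|b_j|^2,
\end{equation*}
and since $s>1$ implies $s-2>-1$, one needs only that $\sum_j\mu_j^{s-2}|b_j|^2<\infty$; this is genuine only borderline, so it is cleaner to note $s-2<s$ and bound $\mu_j^{s-2}\le 1+\mu_j^{s}$, or simply observe that for the homogeneous comparison we may as well treat the pure pressure-drop term as an element of $H^{\sigma}$ for every $\sigma<$ (something) — in any case the forced contribution lies in $H^{s'}$ for all $s'$ strictly below the regularity we are assuming, so it cannot spoil membership. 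Hence $w_0\in H^s(E)$, contradicting the hypothesis, and therefore $w(t,\cdot)\notin H^s(E)$ for every $t>0$.

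The main obstacle I anticipate is not the comparison-of-norms bookkeeping but making the forced-term regularity argument airtight: one must be a little careful that the inhomogeneity generated by $\lambda\equiv1$ (whose expansion coefficients $b_j=\int_E e_j$ decay only like $\mu_j^{-1}$ in a Weyl-averaged sense, so that $\lambda\in H^{\sigma}$ for $\sigma<1/2$ on a smooth domain) does not itself obstruct the argument — but since the claimed threshold is $s>1$ and the forced part is smoother than $H^{1/2}$ in the relevant sense, there is room to spare. Once the text clarifies (as it does) that "this assumption is completely inessential," the same argument runs verbatim for any $\lambda(t)$ bounded in time: the forcing coefficients are then $b_j(t)$, the Duhamel integral still satisfies $\bigl|\int_0^t e^{-\gamma_j(t-\tau)}b_j(\tau)\,d\tau\bigr|\le\|\lambda\|_{L^\infty}/\gamma_j\lesssim 1/\mu_j$, and nothing else changes. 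The whole proof is therefore a one-page spectral computation; the only real content is the boundedness of $\gamma_j t$, which is exactly where the non-parabolic ($-\alpha^2\Delta w_t$) character of NS-Voigt manifests itself.
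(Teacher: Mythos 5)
Your proof is correct and is essentially the paper's own argument: expand in the Dirichlet eigenfunctions of $-\Delta_x$ on $E$, solve the resulting ODEs explicitly, and use that the decay exponents $\nu\mu_j/(1+\alpha^2\mu_j)$ are bounded above by $\nu/\alpha^2$ (and below by $\nu\mu_1/(1+\alpha^2\mu_1)$), so the Voigt term prevents any smoothing while preserving the $H^1_0$ regularity of the data. Only two cosmetic repairs are needed: drop the false intermediate bound $|1-e^{-\gamma_j t}|\le C/\mu_j$ (the exponents $\gamma_j t$ tend to $\nu t/\alpha^2$, not to zero; the bound by $1$ already suffices for the $H^1$ estimate), and in the contradiction step observe that since $\sum_j\mu_j^{s-2}b_j^2$ is guaranteed finite only for $s<5/2$, it is enough to run the argument with $s$ replaced by $\min(s,2)$, which still contradicts the hypothesis that $w_0\notin H^{s'}$ for every $s'>1$.
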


The lack of regularization in the NS-Voigt equation is due to the special
role of the inviscid regularization and it has some consequences when
the Voigt model is used for theoretical purposes, see for instance~\cite{BS2015}.
\begin{proof}[\textit{of Thm.~\ref{thm:smoothness}}]
  First we note that the existence and uniqueness of solution with
  initial datum $w_0$ and external force $\vec{f}=(0,0,0)$ follows as
  in~\cite{CLT2006}. Here we make a more explicit construction in
  order to highlight the regularity properties. The proof is based on
  an explicit analysis of the frequency budget, working with a
  spectral basis to construct the solution. (In the space-periodic
  case this can be done even more explicit with a Fourier series
  expansion, but here we deal with the boundary value problem). 

  Let $\{\phi_m\}$ denote the eigenfunctions of the Laplace operator
  with Dirichlet conditions on $E$,
\begin{equation*}
  \begin{aligned}
    -\Delta \phi_m&=\lambda_m\phi_m \quad \text{in }E,
    \\
    \phi_m&=0\quad \text{on }\partial E,
  \end{aligned}
\end{equation*}
and by standard spectral theory the scalar functions $\{\phi_m\}$ are
smooth, orthonormal with respect to the $L^2(E)$ scalar product, and
the eigenvalues $\{\lambda_m\}$ are an increasing sequence of strictly
positive terms. Hence we construct the solution as
$w(t,x)=\sum_{m\geq1} c_m(t)\phi_m(x)$ and it is immediate to check
that we have to solve the following ordinary differential equation,
for all $m\geq1$
\begin{equation}
  \label{eq:ode}
  (1+\alpha^2 \lambda_m) c'_m(t)+\nu c_m(t)=\beta_m,
\end{equation}
where $\beta_m=\int_E \phi_m(x)\,dx$ and with the initial condition
$c_m(0)=\int_E w_0(x)\,\phi_m(x)\,dx$. Observe that, from the
regularity hypotheses on the initial datum, we have 
\begin{equation*}
  \sum_{m\geq1} \lambda_m |c_m(0)|^2<+\infty\qquad   \sum_{m\geq1}
  \lambda_m^s |c_m(0)|^2=+\infty,\text{ for all }s>1.
\end{equation*}
    Explicit integration  of~\eqref{eq:ode} gives 
    \begin{equation*}
      c_m(t)=c_m(0)\,e^{-\frac{\lambda_m \nu}{1+ \alpha^2\lambda_m
        }t}+\beta_m\left[\frac{1}{\lambda_m  \nu }-\frac{e^{-\frac{t
              \lambda_m  \nu}{1+\alpha^2\lambda_m  }}}{\lambda_m  \nu
        }\right]\qquad m\geq1.
    \end{equation*}
    It is straightforward to check that 
    $\sum_{1\leq m\leq N} c_m(t)\phi_m(x)\to w(t,x)$ as $N\to+\infty$,
    and from the explicit
    expression of the solution we can see that even if $\beta_m$ would
    be zero, then $\|w(t,x)\|_{H^s}=\infty$, for all $s>1$, and for
    all $t\geq0$, due to the
    fact that the exponential $e^{-\frac{\lambda_m \nu}{1+
        \alpha^2\lambda_m }t}$ does not produce any smoothing (as if
    $\alpha=0$), since
\begin{equation*}
\frac{\lambda_1 \nu}{1+ \alpha^2\lambda_1 }\leq  \frac{\lambda_m \nu}{1+
  \alpha^2\lambda_m }\leq\frac{\nu}{\alpha^2}.
\end{equation*}
The situation will be different for what concerns time
regularity, since any  time-differentiation will produce the zeroth
order term $\frac{\lambda_m \nu}{1+
  \alpha^2\lambda_m }$  which will not affect the convergence of the
series describing the solution.
\end{proof}
\subsection{Pulsatile flows}
\label{sec:pulsatile}
In this section we consider pulsatile flows, which are driven by a
time-periodic force (the pressure gradient). This is a remarkable
case motivated also by recent  studies in~\cite{Bei2005c,BGMS2014} for
heart-beat driven, human physiological flows, including blood and
cerebrospinal fluid.
In the so-called `direct problem,' a pulsatile pressure-drop is
assigned in problem~\eqref{eq:basic_flow}.  Together with this
problem, in many cases of physiological interest (when the measure of
the pressure cannot be done directly with sufficient accuracy by
phase-contrast Magnetic Resonance Imaging  or Doppler
ultrasonography) it will be also meaningful to study a problem with
assigned flux: $ \int_{E} w(t,x)\,dx=\Phi(t)$, for some smooth-enough
given scalar function $\Phi(t)$.  The Poiseuille-type \textit{ansatz}
will lead to the problem: Given $\Phi(t)$, find $(w(t,x),\lambda(t))$
such that
\begin{equation*}
  \begin{cases}
    \partial_t w(t,x) - \nu\Delta_x w(t,x) = \lambda(t),&\qquad x\in 
    E,\; t\in\R^+,
    \\
    w(t,x)=0,&\qquad x\in \partial E,\; t\in\R^+,
    \\
    \int_{E} w(t,x)\,dx=\Phi(t),&\qquad t\in\R^+,
  \end{cases}
\end{equation*}
which is an \textit{inverse} problem and it is linked to one of the
nowadays classical Leray problems. 

The study of this problem
when $E$ is a circle,
$\lambda(t)$ is time-periodic, and $\alpha=0$ dates back to Richardson
(1926) and Sexl (1930), while a big interest for applications to
bio-fluids raised especially after the work of the physiologist
J.~R.~Womersley~\cite{Wom1955}. In particular, he quantitatively clarified why for
the NSE, if the pulsation is fast enough, then the qualitative
behavior of solution can change drastically: Instead of a parabolic
type profile (as in the stationary case) a new phenomenon called
``\textit{{A}nnular effect}'' appears: If the frequency of pulsation
of $\lambda$ is large enough (with respect to the other relevant
parameters of the problem), then the maximum of the velocity is not
located along the axis of the channel, but near the wall.

This is particularly of interest in blood modeling, where --at first
approximation-- we have a pulsatile flow (blood driven by heart beat)
in a straight channel (larger arteries). This motivated the analysis
of Womersley, who explained by analytical formul\ae\ the experimental
observation that in larger arteries of the rabbit and the dog there is
also a reversal of the flow. This suggests that pulsatile flows, even
in the linear range, can be different and much more complicated than
the steady ones. The analysis of Womersley was based on a single mode
pressure-drop $\lambda(t)=e^{i n t}$ and a circular cross-section
$E=\{x^2_1+x^2_2<R\}$.  In cylindrical coordinates $(r,\theta)$ and
after the separation of variables $w(t,x)=e^{i\,nt}W(r)$, the
system~\eqref{eq:basic_flow} becomes: find $W(r)$ such that
\begin{equation*}
  \begin{aligned}
    i\,n\, W(r) -\nu\Big(W''(r)+\frac{W(r)}{r}\Big)&=1\qquad\text{for
    } 0<r<R, 
    \\
    W'(0)=W(R)&=0.
  \end{aligned}
\end{equation*}
The above equation for a cylindrical symmetric solution can be solved
by means of an expression involving the zeroth order Bessel function
$J_0(r)$. The most widely known contribution\footnote{Most important to this
  field, but recall that he recruited A.M.~Turing for the
  code-breaking project in WWII. He also convinced Sir G. Darwin to
  start the project of an \textit{all British} computer and coined the
  name Automatic Computing Engine (ACE) for an early electronic
  computer.} of Womersley is that of determining the non-dimensional
quantity (named later Womersley number)
\begin{equation*}
  \text{Wo}:={R}\sqrt{\frac{\omega}{\nu}},
\end{equation*}
which determines the qualitative behavior of solutions, where $R$ is
the radius of the channel, $\omega$ the frequency of the pressure
drop, and $\nu$ the kinematic viscosity.

The Womersley number is the ratio of the transient or oscillatory
inertia force to the shear force. When $\text{Wo}\leq 1$, the
frequency of pulsations is sufficiently low that a parabolic velocity
profile has time to develop during each cycle, and the flow will be
very nearly in phase with the pressure gradient. In this case using
instantaneous pressure to modulate a parabolic profile will give a
reasonable solution. When $\text{Wo}\geq10$ the frequency of
pulsations is sufficiently large that the velocity profile is
relatively flat or plug-like, and the mean flow lags the pressure
gradient by about 90 degrees.

In addition to this peculiar effect, the Womersley solution has become
a paradigm in the analysis of biological fluids (see
Fung~\cite{Fun1997} and Quarteroni and Formaggia~\cite{QF2004}) for
its simplicity, being one of the very few exact unsteady solutions to
the NSE, to debug complicated CFD flow, and also to provide
an improved source of initial/boundary data, as well as a benchmark
solution for pulsatile flows.

We now perform an analysis of the NS-Voigt equations for this problem. Assume that  
$\Phi(t)=e^{i\,\omega t}$, and by the ansatz $w(t,x)=e^{i\omega t}W(r)$ in the case
of~\eqref{eq:cadm1} we arrive at the following boundary value problem for
a single ODE: Find $W(r)$ such that
\begin{equation*}
  \begin{aligned}
    i\,\omega\,\Big[
    W(r)-\alpha^{2}\Big(W''(r)+\frac{W(r)}{r}\Big)\Big]
    -\nu\Big(W''(r)+\frac{W(r)}{r}\Big)&=1\qquad\text{for } 0<r<R, 
    \\
    W'(0)=W(R)&=0.
  \end{aligned}
\end{equation*}
The above equation can be exactly solved by means of an expression
involving the zeroth order Bessel function $J_0(r)$.  By explicit
calculations, we get
\begin{equation*}
  W(r)=\frac{1}{i\, \omega} \left(1-\frac{J_0\left(\frac{i r
          \sqrt{\omega }}{\sqrt{\alpha ^2 \omega 
            -i \nu }}\right)}{J_0\left(\frac{i R \sqrt{\omega }}{\sqrt{\alpha ^2 \omega
            -i \nu }}\right)}\right),
\end{equation*}
which reduces to the classical solution obtained by Womersley in the
case $\alpha=0$. Hence, we get that the natural non-dimensional
quantity is the following (which we call $\alpha$-Womersley number)
\begin{equation*}
  \alpha\text{-Wo}
  :=R\,\frac{\sqrt{\omega}}{\sqrt[4]{\alpha^{4}\omega^{2}+\nu^{2}}}.
\end{equation*}
The $\alpha$-Womersley number is always smaller than the Womersley
number of the flow with the same viscosity, radius of the pipe, and
frequency. In fact
\begin{equation*}
  \frac{\text{Wo}}{\alpha\text{-Wo}}=
\sqrt[4]{1+\frac{\alpha^{4}\omega^{2}}{\nu^{2}}}, 
\end{equation*}
and the difference can be significant if $\alpha$ is not properly
chosen. This gives the hint that a not finely tuned choice of $\alpha$ can
completely change the behavior of solutions, since the flow can pass
from one regime to another, if $\text{Wo}$ is large and $\alpha$ is
not small enough.

Here, we want to give a more explicit and real solution by considering
the 2D case, that is, the cross section $E$ is the 1D interval
$E=:]-R,R[$, and showing the effect of a pulsatile force $\cos(\omega
t)$, which is not a peculiar effect of the space dimension. In
particular, we will see that the qualitative behavior is not better in
a 2D pipe, with respect to the 3D one. We consider then the following
toy problem
\begin{equation*}
  \begin{cases}
    \partial_t w(t,x) - \alpha^2\,w_{txx}(t,x)-\nu\, w_{xx}(t,x) =
    \cos(\omega t),&\qquad x\in]-R,R[ ,\; t\in\R^+,
    \\
    w(t,x)=0&\qquad x=\pm R,\; t\in\R^+,
  \end{cases}
\end{equation*}
which corresponds to the pulsatile flow in a rectangular 2D channel.
In this case the exact solution (which is real as can be checked by
direct inspection) has the following explicit expression:
\begin{equation*}
  \begin{aligned}
    w(t,x)=\frac{\sin(\omega\,t)}{\omega}+\frac{1}{2\omega}(-\sin
    (\omega\,t )-i \cos (\omega\,t )) \,
    \text{sech}\left(\frac{R}{\sqrt{\alpha ^2+\frac{i \nu }{\omega
          }}}\right) \cosh \left(\frac{x}{\sqrt{\alpha ^2+\frac{i \nu
          }{\omega }}}\right)
    \\
    +\frac{1}{2\omega} (- \sin (t \omega )+i \cos (\omega\,t ))
    \,\text{sech}\left(\frac{R}{\sqrt{\alpha ^2-\frac{i \nu }{\omega
          }}}\right) \cosh \left(\frac{x}{\sqrt{\alpha ^2-\frac{i \nu
          }{\omega }}}\right).
  \end{aligned}
\end{equation*}
If we set, in the case $\alpha=0$ (i.e. the NSE), $\nu=1$, $R=1$,
and $\omega=144$,
we find Wo=12,
and the solution at $t=0$ shows the annular effect and reversal of the
flow at distance about $R/2$, as shown in Figure~\ref{Wo12}.

\begin{figure}[!h]
\begin{center}
\epsfig{file=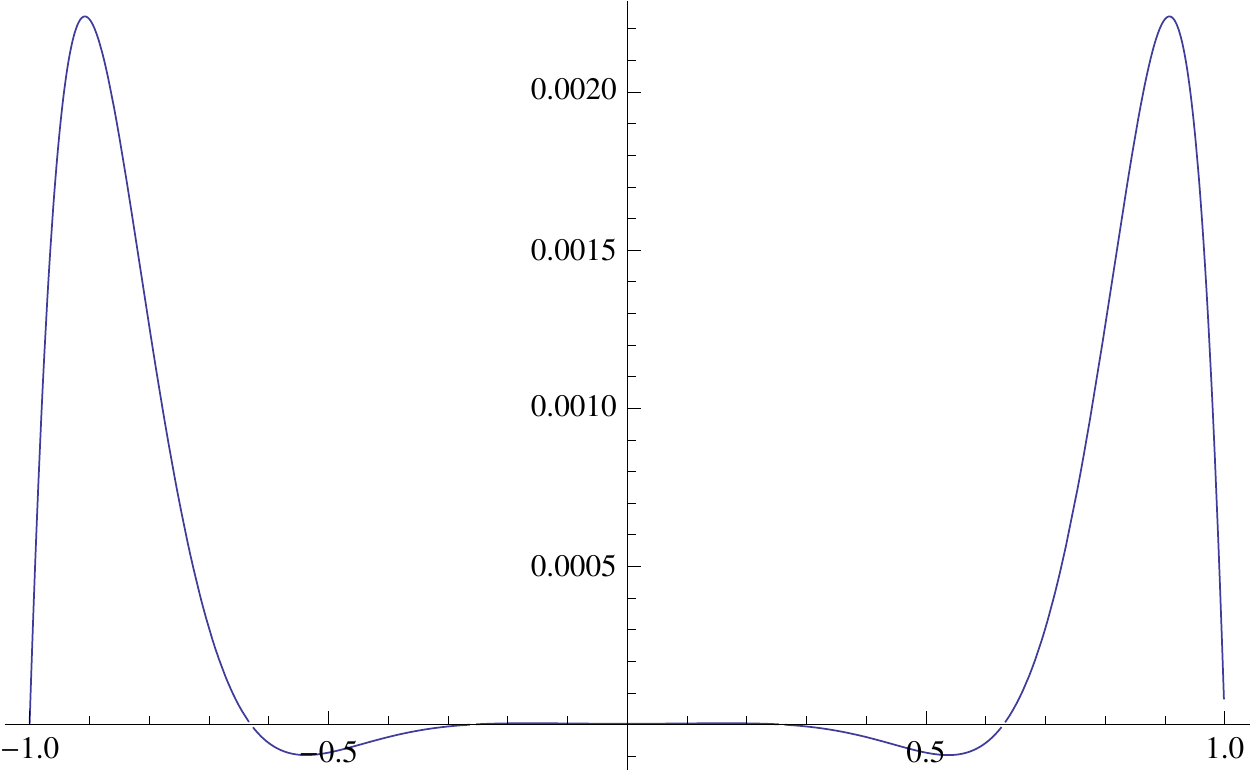,width=6.0cm}
 \end{center}
 \caption{\label{Wo12} Shown above is the profile of the Navier-Stokes
   solution at time $t=0$ with $\textrm{Wo}=12$.}
\end{figure}
Next, we plot with the same parameters, the corresponding solutions in
terms of different values of $\alpha$.
\begin{figure}[h]
\label{fig:annular-Voigt}
\centering
            \includegraphics[height=3.5cm]{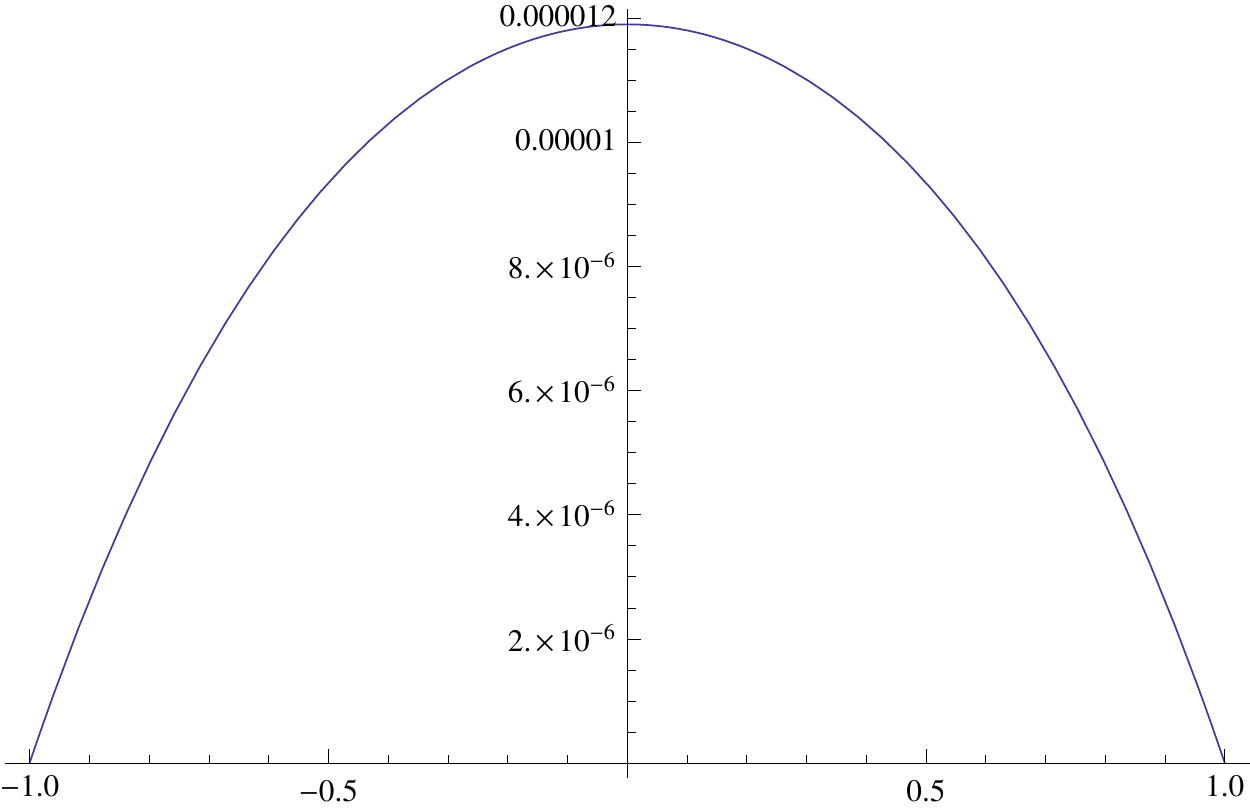}
\hspace*{1cm}\includegraphics[height=3.5cm]{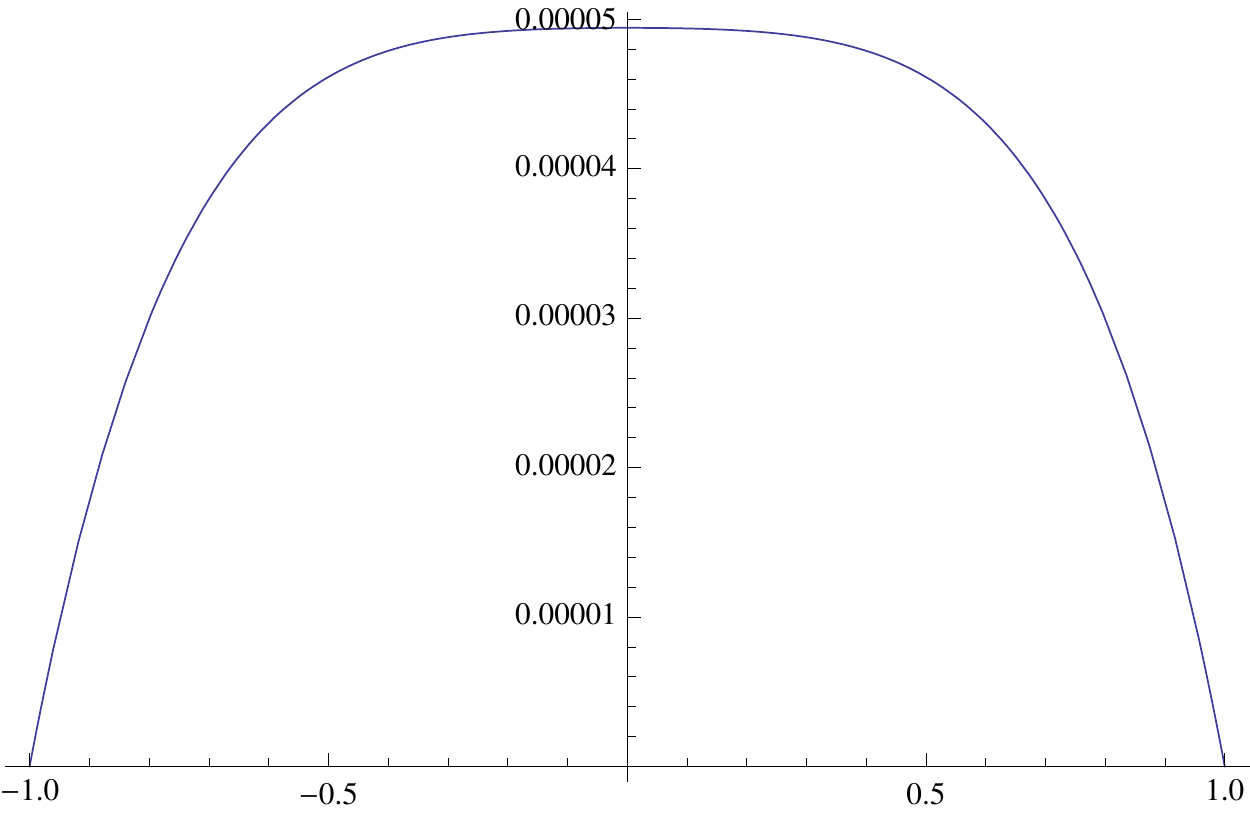}

\vspace*{1cm}

            \includegraphics[height=3.5cm]{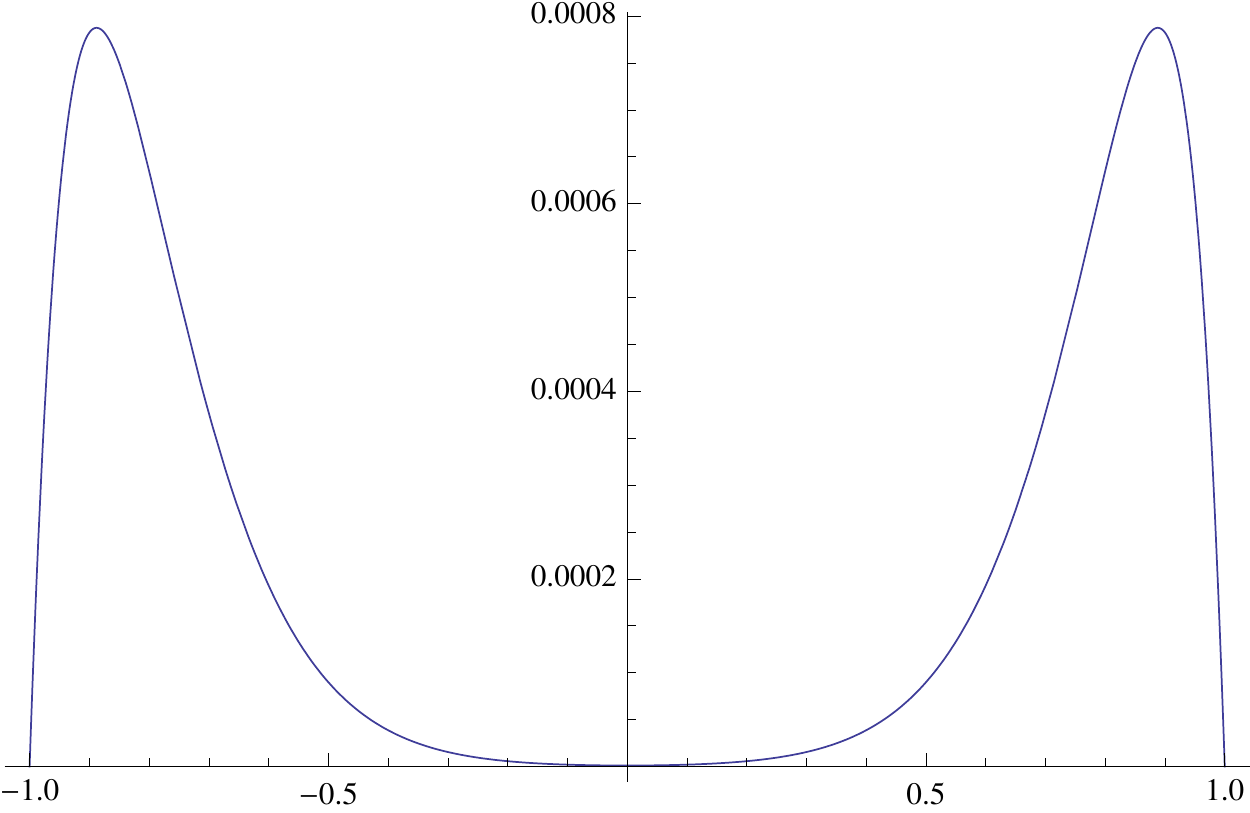}
\hspace*{1cm}\includegraphics[height=3.5cm]{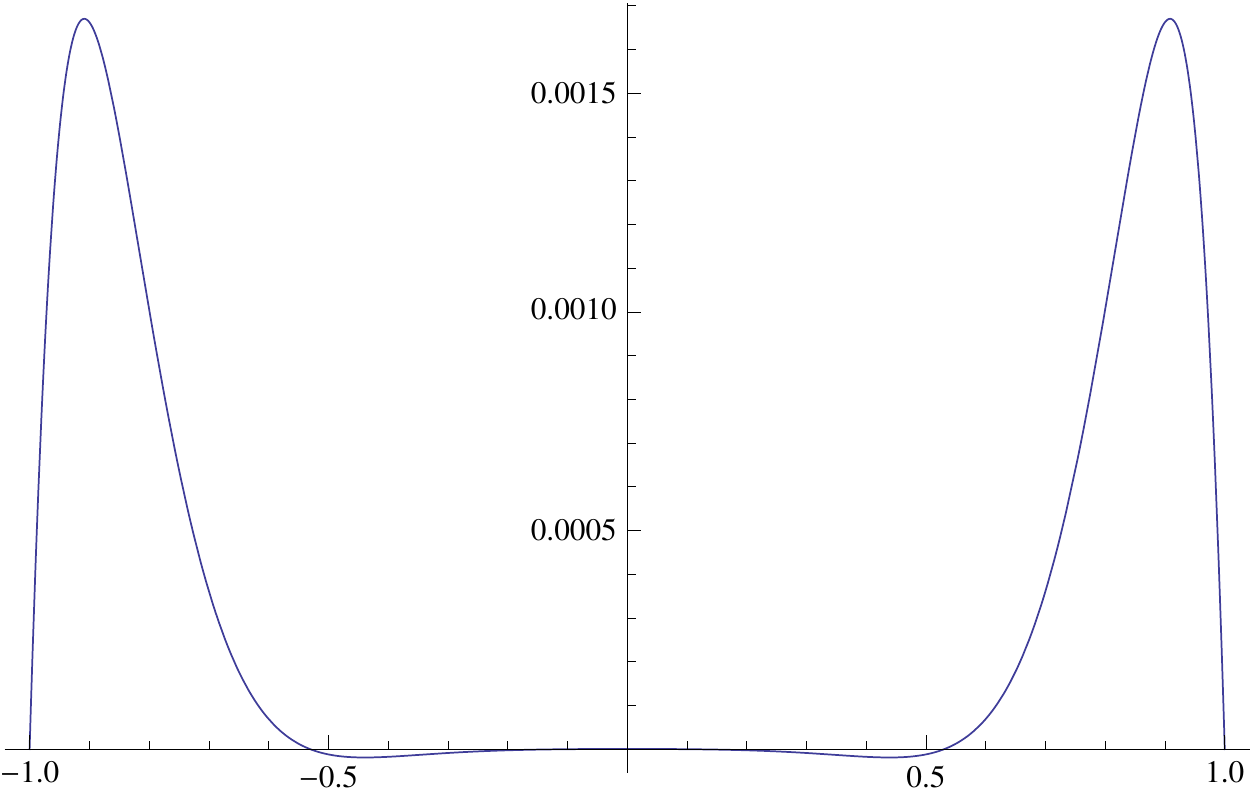}
   \caption{Profile of the solution at time $t=0$ with (a) $\alpha=1$, hence
  $\alpha\text{-Wo}=0.999988$; (b) $\alpha=1$, hence
  $\alpha\text{-Wo}=1.99961$; (c) $\alpha=0.1$, hence
  $\alpha\text{-Wo}=9.06295$ and still no reversal starts (even if
  the solution 
  at $x=0$ is about $10^{-6}$); (d) $\alpha=0.1$, hence
  $\alpha\text{-Wo}=11.6399$}
\end{figure}
Pictures in Figure~\ref{fig:annular-Voigt} show that the qualitative
behavior is quite different with respect to the variations of
$\alpha$. Simple calculations show that, in order to reconstruct the
correct pulsatile behavior with flow reversal, the parameter $\alpha$
must satisfy
\begin{equation*}
  \alpha<\frac{\sqrt{\nu } \sqrt[4]{\text{Wo}^4-10000}}{10 \sqrt{\omega }}.
\end{equation*}
As is evidenced in Figure~\ref{fig:annular-Voigt}, this puts strong
limitations on the choice of $\alpha$, and moreover such a small
$\alpha$ moves the NS-Voigt systems closer and closer to the
NSE. Moreover, in LES, $\alpha$ has the role of the smallest resolved
length scale, and when looking at the system in this sense, the
limitations for this model are those of a good resolution of the flow,
otherwise transient features are lost. This could be used to speculate
that the space filter alone cannot be very good when studying
time-evolution problems with transient or periodic effects, or at the
very least the smallest resolved length scale should not be constant
across the domain.  This idea is mentioned in~\cite{GRT13}, and indeed
better results for turbulent channel flow were found when $\alpha$ was
chosen of the order of the local mesh width, which was much finer in
the boundary layer.
\section{Conclusions}
We have presented a detailed mathematical analysis of several
important aspects of the RADM.  In particular, we presented
a detailed analysis of the model's treatment of energy, which showed
that deconvolution acts to drain energy from the system, and likewise
affects the energy spectrum.  We derived a scaling for energy in its
Fourier modes, which shows that increasing deconvolution in the model
shortens the inertial range, and that the inertial range is split into
two parts: the larger scales of the inertial range have a $k^{-5/3}$
scaling with energy, while the lower numbers have a faster $k^{-3}$
scaling.  Interestingly, this is the same spectral scaling as both the
Leray-$\alpha$ and simplified Bardina models.  We presented energy
spectra from computations of forced, isotropic, homogeneous turbulence
with varying $N$ and $\alpha$, which verified the predicted scalings,
and showed good agreement on large scales with DNS of the NSE.  In
addition to the RADM energy study, we also proved existence of smooth
global attractors, and provided an analytical study of pulsatile flows
that shows small $\alpha$ may be requested to study certain flows.

There are several important directions for future work with the RADM.
First and foremost, more benchmark testing needs done.  In particular,
since the model performed well with $Re_{\tau}=180$ turbulent channel
flow, future testing on the benchmarks of $Re_{\tau}$=395 and 590 are
natural next steps.  Turbulent flow around obstacles is another
important test problem, as is applying the model to specific
application problems such as flow through an aorta.  The model could
also be studied with different types of filtering and deconvolution.
%

\end{document}